\newtheorem{thm}{Theorem}
\newtheorem{cor}{Corollary}
\newtheorem{lem}{Lemma}
\newcommand{\expect}[1]{\mathbb{E}\left\{#1\right\}}
\newcommand{\defequiv}{\mbox{\raisebox{-.3ex}{$\overset{\vartriangle}{=}$}}}
\newcommand{\norm}[1]{||{#1}||}
\newcommand{\bv}[1]{{\boldsymbol{#1} }}
\newcommand{\script}[1]{{{\cal{#1} }}}
\begin{document}

\title{Queue Stability and Probability 1 Convergence via Lyapunov Optimization}
\author{Michael J. Neely%
\thanks{Michael J. Neely is with the  Electrical Engineering department at the University
of Southern California, Los Angeles, CA. (web: http://www-rcf.usc.edu/$\sim$mjneely).} 
\thanks{This material is supported in part  by one or more of 
the following: the DARPA IT-MANET program
grant W911NF-07-0028, 
the NSF Career grant CCF-0747525, and continuing through participation in the 
Network Science Collaborative Technology Alliance sponsored
by the U.S. Army Research Laboratory.}}

\markboth{}{Neely}

\maketitle

\begin{abstract} 
Lyapunov drift and Lyapunov optimization are powerful techniques for optimizing time averages in 
stochastic queueing networks subject to stability.   However, there are various definitions
of queue stability in the literature, and the most convenient Lyapunov drift conditions
often provide stability and performance bounds only in terms of a time average expectation, 
rather than a pure time average. We extend the theory to show that for quadratic Lyapunov functions, 
 the basic drift condition, together with a mild bounded fourth moment condition,  
 implies all major forms of stability.  Further, we show that the basic 
drift-plus-penalty condition implies that the same bounds for queue backlog and penalty 
expenditure that are known to hold for time average expectations also hold for pure time averages with 
probability 1.   Our analysis combines Lyapunov drift theory with the Kolmogorov law of large numbers
for martingale differences with finite variance. 
\end{abstract} 

\begin{keywords} 
Network utility maximization, Wireless networks, dynamic scheduling, stochastic network optimization
\end{keywords}

\section{Introduction} 

\nocite{now}\nocite{neely-thesis}\nocite{neely-fairness-infocom05}\nocite{neely-energy-it}\nocite{neely-asilomar08}\nocite{stolyar-greedy}\nocite{atilla-fairness-ton}\nocite{neely-utility-delay-jsac}\nocite{neely-energy-delay-it}\nocite{tutorial-lin}\nocite{atilla-primal-dual-jsac}\nocite{chiang-sno}\nocite{primal-dual-cmu}

Lyapunov optimization is a powerful technique for optimizing time averages in stochastic queueing 
networks (see \cite{now}-\cite{primal-dual-cmu}). 
Work in \cite{now} presents a \emph{drift-plus-penalty theorem} 
that provides a methodology for designing control algorithms to maximize time average network utility subject to queue
stability.  The theorem also provides explicit performance tradeoffs between utility maximization and average queue
backlog. 
Example applications include maximizing network throughput subject to average power constraints, 
minimizing average power expenditure subject to network stability, and maximizing network
throughput-utility subject to network stability \cite{now}-\cite{neely-asilomar08}. 
The drift-plus-penalty theorem provides performance bounds in terms of time average expectations. 
  Time average expectations are
the same as pure time averages (with probability 1) in certain cases, such as when 
the system evolves on an irreducible and positive recurrent Markov chain with a finite or countably infinite state space (and
when some additional mild assumptions are satisfied). 
However, many systems have an uncountably infinite state space and/or do not have the required Markov structure. It is 
not clear if pure time averages satisfy the same guarantees in general.  This paper proves a
sample path version of the 
drift-plus-penalty theorem, showing that if  \emph{fourth moment boundedness} conditions are satisfied, then 
the same guarantees hold for  pure time averages with probability 1.  

To understand this result and the systems it can be applied to, 
we consider a stochastic queueing network that evolves in discrete time with unit timeslots $t \in \{0, 1, 2,\ldots\}$. 
Suppose there are $K$ queues, and let $\bv{Q}(t) = (Q_1(t), \ldots, Q_K(t))$ represent the vector of current queue backlogs. 
Random events, such as random channel conditions and packet arrivals, can take place every slot.  A network
controller reacts to the random events by choosing a control action every slot.  The control action affects queue arrival
and service variables on slot $t$, and also incurs a vector of \emph{penalties} $\bv{y}(t) = (y_0(t), y_1(t), \ldots, y_M(t))$.  
The goal is to stabilize all network queues
while minimizing the time average of $y_0(t)$ subject to the time 
averages of $y_m(t)$ being less than or equal to 0: 
\begin{eqnarray}
\mbox{Minimize:} && \overline{y}_0 \label{eq:foofoo1} \\
\mbox{Subject to:} &(1)& \overline{y}_m \leq 0 \: \: \forall m \in \{1, \ldots, M\}  \label{eq:foofoo2} \\
 &(2)& \mbox{All queues are stable}  \label{eq:foofoo3} 
\end{eqnarray} 

Assuming that the problem is feasible and that a certain drift-plus-penalty condition is met, 
the existing drift-plus-penalty theory in \cite{now} can solve this problem by 
specifying a class of algorithms, parameterized by a constant $V\geq 0$ chosen as desired,  to yield: 
\begin{eqnarray}
\limsup_{t\rightarrow\infty} \frac{1}{t}\sum_{\tau=0}^{t-1} \expect{y_0(\tau)} &\leq& y_0^* + O(1/V) \label{eq:intro1} \\
\limsup_{t\rightarrow\infty} \frac{1}{t}\sum_{\tau=0}^{t-1} \expect{y_m(\tau)} &\leq& 0 \: \: \forall m \in \{1, \ldots, M\} \label{eq:intro2} \\
\limsup_{t\rightarrow\infty} \frac{1}{t}\sum_{\tau=0}^{t-1} \sum_{k=1}^K\expect{|Q_k(\tau)|} &\leq& O(V) \: \: \forall k \in \{1, \ldots, K\}  \label{eq:intro3} 
\end{eqnarray}
where $y_0^*$ is the infimum time average of $y_0(t)$ over all algorithms that can satisfy the desired constraints. 
The guarantee (\ref{eq:intro3}) implies that the $\limsup$ time average expected queue backlog is finite for all queues, and is
a condition often called \emph{strong stability}.  The above bounds say that the time average constraints $\overline{y}_m \leq 0$
are satisfied for all $m \in \{1, \ldots, M\}$ in a \emph{time average expected sense}, that all queues $Q_k(t)$ 
are strongly stable with average 
backlog $O(V)$, and time average expected penalty is within $O(1/V)$ of optimality.  The $O(1/V)$ penalty gap can be made arbitrarily 
small by choosing a suitably large $V$ parameter, at the expense of increasing the average backlog bound linearly with $V$. 

We would like to know  when we can also claim that: 
\begin{eqnarray}
\limsup_{t\rightarrow\infty} \frac{1}{t}\sum_{\tau=0}^{t-1} y_0(\tau) &\leq& y_0^* + O(1/V) \: \: (w.p.1) \label{eq:intro1a} \\
\limsup_{t\rightarrow\infty} \frac{1}{t}\sum_{\tau=0}^{t-1} y_m(\tau) &\leq& 0 \: \: (w.p.1) \: \: \: \forall m \in \{1, \ldots, M\}  \label{eq:intro2b} \\
\limsup_{t\rightarrow\infty} \frac{1}{t}\sum_{\tau=0}^{t-1} \sum_{k=1}^K|Q_k(\tau)| &\leq& O(V) \: \: (w.p.1) \: \: \: \forall k \in \{1, \ldots, K\}\label{eq:intro3c} 
\end{eqnarray}
where ``w.p.1'' stands for ``with probability 1.'' 
This paper shows that (\ref{eq:intro1a})-(\ref{eq:intro3c}) hold
if a similar drift-plus-penalty condition holds, and additionally if the $y_m(t)$ penalties and the one-slot
changes in queue backlogs
have conditionally bounded fourth 
moments given the past. 

We note that related problems of minimizing convex functions of time averages, rather than minimizing time 
averages themselves, can be transformed into problems of the type (\ref{eq:foofoo1})-(\ref{eq:foofoo3})
using a technique of \emph{auxiliary
variables}  \cite{neely-fairness-infocom05}\cite{now}\cite{neely-utility-delay-jsac}\cite{neely-mwl-arxiv}.  Hence, these extended
problems can also be treated using the framework of this paper. However,  for brevity we limit attention
to problems of the type (\ref{eq:foofoo1})-(\ref{eq:foofoo3}).

\subsection{On relationships between time average expectations and time averages} 

It is known by Fatou's Lemma 
that if a random process is deterministically lower-bounded (such as being non-negative)
and has time averages that converge to a constant with probability 1, then this
constant must be less than or equal to the $\liminf$ time average expectation \cite{williams-martingale}.   
Thus, the inequalities (\ref{eq:intro1})-(\ref{eq:intro3})
imply (\ref{eq:intro1a})-(\ref{eq:intro3c}) when the $y_m(t)$ and $|Q_k(t)|$ processes 
are deterministically lower bounded and have convergent
time averages with probability 1.  Systems that evolve on 
positive recurrent  irreducible Markov chains with finite or countably infinite state space
can often be shown to have convergent
time average penalties. 
Further,  if the Markov chain is irreducible and has a finite or countably infinite state space
with the property that the event $\{\sum_{k=1}^K |Q_k| > \theta\}$ corresponds to 
only a finite number of states for each real number $\theta$, then the condition (\ref{eq:intro3}) implies positive recurrence. 
However, in addition to the actual network queues, the drift-plus-penalty method introduces \emph{virtual queues} 
to enforce the desired time average constraints.  These
queues typically give the overall system an 
uncountably infinite state space. 
Time average convergence
can be shown using generalized Harris recurrence theory for Markov chains with uncountably infinite state space, 
provided that certain \emph{generalized irreducibility} 
assumptions and \emph{petite set} assumptions are satisfied \cite{meyn-book}.  However, it is often difficult to check if these assumptions
hold for the particular systems of interest. 

Strong stability of a queue $Q(t)$, 
together
with either deterministically bounded arrival or server rate processes, implies \emph{rate stability} \cite{stability}.  Rate stability of $Q(t)$
means that $\lim_{t\rightarrow\infty} Q(t)/t = 0$ with probability 1.   This 
result can be used to prove that (\ref{eq:intro2b}) holds if the $y_m(t)$
processes are suitably deterministically bounded on each slot $t$.   However, this does not ensure the constraints (\ref{eq:intro1a}) or (\ref{eq:intro3c})
hold.

Certain types of systems, such as networks with flow control, often have a structure that 
yields \emph{deterministically bounded} 
queues \cite{neely-energy-it}\cite{neely-universal-scheduling-cdc2010}, which can be used to 
ensure constraints (\ref{eq:intro2b})-(\ref{eq:intro3c}) hold for those systems.  However, this requires special structure, and 
it also does not ensure (\ref{eq:intro1a}) holds unless suitable Markov chain assumptions are met.  

\subsection{Alternative algorithms} 
 

A dual-based algorithm related to the drift-plus-penalty method is considered for a wireless downlink with ``infinite
backlog''  in \cite{atilla-fairness-ton}, and convergence to near-optimal utility is shown using a countable state space
Markov chain assumption.  Stochastic approximation algorithms are used in \cite{lee-stochastic-scheduling}, 
and diminishing stepsize convex programming is used in  \cite{lin-shroff-cdc04} to treat problems that are 
more deterministic in structure. 
The works \cite{atilla-fairness-ton}\cite{lee-stochastic-scheduling}\cite{lin-shroff-cdc04} do not 
show the $[O(1/V), O(V)]$ performance-backlog tradeoff. 

Primal-dual algorithms are considered for scheduling in wireless systems with ``infinite backlog'' in 
\cite{vijay-allerton02}\cite{prop-fair-down} and shown to converge to a utility optimal operating 
point,  although this work does not consider queueing or time average constraints.  
A related primal-dual algorithm is treated in \cite{stolyar-greedy} for systems with queues. A fluid version of the 
system is shown to have a utility-optimal trajectory, and it is conjectured that the actual system has a near-optimal
utility.  Recent work in  \cite{primal-dual-cmu} considers fluid analysis of 
primal-dual updates 
and proves near-optimal utility of 
the actual system with probability 1.  It also treats a more general class of objective functions that have 
time varying parameters. 
However, it considers only rate stability for queues and does not specify the $[O(1/V), O(V)]$ tradeoff.  
Work in \cite{neely-non-convex} considers stochastic queues with a non-convex objective function, and shows that
\emph{if} the throughput vector converges, it converges to a near-local optimum or a
critical point with a $[O(1/V), O(V)]$ utility-delay
tradeoff (where a near-local optimum is a near-global optimum in the special case of convex problems).

\subsection{Paper Outline} 

In the next section we review the basic drift-plus-penalty theorem and discuss the performance
bounds it provides, which are in terms of time average expectations.  We then state the main
theorem of this paper, which shows the same bounds hold as pure time averages with probability 1.  A key 
special case of this theorem is that if a certain quadratic Lyapunov drift condition is satisfied, then the network
queues satisfy all of the six major forms of queue stability.   Section \ref{section:time-averages} provides background
on the Kolmogorov law of large numbers needed in our analysis, and derives a simple but useful generalized drift-plus-penalty
theorem.  Section \ref{section:main} shows that the conditions required for the generalized drift-plus-penalty theorem to hold
are satisfied under quadratic Lyapunov functions if certain boundedness properties hold.  Section \ref{section:application} 
uses this result in queueing networks to derive bounds of the form  (\ref{eq:intro1})-(\ref{eq:intro3}) for those systems.

\section{The Drift-Plus-Penalty Theorem} \label{section:background} 

Let $\bv{Q}(t) \defequiv (Q_1(t), Q_2(t), \ldots, Q_K(t))$ be a stochastic vector with real-valued components, and
let $p(t)$ be a real-valued stochastic process on the same probability space as $\bv{Q}(t)$.   
These processes evolve in discrete time with unit 
time slots $t \in\{0, 1, 2,\ldots\}$. 
The vector $\bv{Q}(t)$ can represent \emph{queue backlogs} in a network of $K$ queues.  The process $p(t)$ can represent a \emph{penalty process}, where $p(t)$ is a real-valued
penalty (such as power expenditure)  incurred by some control action taken by the system 
on slot $t$.    While typical queue backlogs and penalties  
are non-negative, for generality we allow them to possibly take negative values.  

For each slot $t$, define $\script{H}(t)$ as the \emph{history} of past $\bv{Q}(\tau)$  and $p(\tau)$ values, where $\bv{Q}(\tau)$ values
are taken up to and including slot $t$, and $p(\tau)$ values are taken up to but \emph{not} including slot $t$.   Specifically, define
$\script{H}(0) \defequiv \{\bv{Q}(0)\}$, and for each $t>0$ define: 
\begin{equation} \label{eq:history-Q} 
 \script{H}(t) \defequiv \{\bv{Q}(0), \bv{Q}(1), \ldots, \bv{Q}(t), p(0), p(1), \ldots, p(t-1)\} 
 \end{equation} 
 As a scalar measure of the size of the $\bv{Q}(t)$ vector, define the following \emph{quadratic Lyapunov function}: 
\begin{equation} \label{eq:lyapunov-function} 
L(\bv{Q}(t)) \defequiv \frac{1}{2}\sum_{k=1}^K w_k Q_k(t)^2 
\end{equation} 
where the constants $w_k$ are positive weights.  Define $\Delta(\script{H}(t))$ as the \emph{conditional Lyapunov
drift}: 
\begin{equation} \label{eq:lyapunov-drift} 
 \Delta(\script{H}(t)) \defequiv \expect{L(\bv{Q}(t+1)) - L(\bv{Q}(t)) |\script{H}(t)}
 \end{equation} 
 Note that $\script{H}(t)$ includes $\bv{Q}(t)$, and so the above conditional expectation is with respect
 to the conditional distribution of $\bv{Q}(t+1)$ given $\bv{Q}(0), \ldots, \bv{Q}(t), p(0), \ldots, p(t-1)$. 

 The \emph{drift-plus-penalty} algorithm
 for minimizing the time average expected penalty $p(t)$ 
 subject to queue stability operates as follows:  Every slot $t$ the network controller observes the current
 $\script{H}(t)$ and chooses a control policy that minimizes a bound on the following expression:\footnote{Strictly 
 speaking, the prior
 work in \cite{now} defines the Lyapunov drift by conditioning 
 only on $\bv{Q}(t)$, rather than on the full history $\script{H}(t)$.  We condition 
 on $\script{H}(t)$ in this paper because such conditioning is needed for application of the Kolmogorov
 law of large numbers.} 
 \begin{equation} \label{eq:dpp-expression} 
 \Delta(\script{H}(t)) + V\expect{p(t)|\script{H}(t)} 
 \end{equation} 
 where $V$ is a non-negative control parameter that is chosen to affect a desired
tradeoff between the average penalty and the average queue
backlog.   A version of this algorithm was developed in \cite{neely-thesis} for maximizing throughput-utility
subject to stability, and simple modifications were presented for other contexts in  \cite{now}\cite{neely-energy-it}. 
 This is a useful algorithm for queueing networks because it can typically 
 be implemented based only on $\bv{Q}(t)$, without keeping
 a memory of the full history and without requiring knowledge of traffic rates or channel probabilities
 (see applications in Section \ref{section:application}). 
 Such a control policy  
often gives rise to stochastic processes $\bv{Q}(t)$ and $p(t)$ that  
satisfy the following \emph{drift-plus-penalty condition} for all slots $t \in\{0, 1, 2, \ldots\}$ and all possible
$\script{H}(t)$: 
\begin{equation} \label{eq:dpp-condition} 
\Delta(\script{H}(t)) + V\expect{p(t)|\script{H}(t)} \leq B + Vp^* - \epsilon \sum_{k=1}^K|Q_k(t)| 
\end{equation} 
where $B$, $p^*$, $\epsilon$ are finite constants, with $\epsilon>0$.  The value  
$p^*$ represents a target value for the time average expectation of the 
penalty process $p(t)$.   
The following theorem from \cite{now}\cite{neely-thesis}\cite{neely-energy-it} shows that this condition immediately implies
the time average expectation of $p(t)$ is either above the target $p^*$, or is within a distance of at most $O(1/V)$ from $p^*$, 
while ensuring time average expected queue backlog is $O(V)$.    

\begin{thm} \label{thm:lyap-opt-expectation} (Lyapunov Optimization with Expectations \cite{now}\cite{neely-thesis}\cite{neely-energy-it}) Assume
that $\expect{L(\bv{Q}(0))} < \infty$, and that the condition (\ref{eq:dpp-condition}) holds for some finite 
constants $B$, $p^*$, $V>0$, and $\epsilon>0$.  If there is a finite (and possibly negative) constant $p_{min}$ such that
$\expect{p(t)} \geq p_{min}$ for all slots $t \in \{0, 1, 2, \ldots\}$, then: 
\begin{eqnarray}
\limsup_{M\rightarrow\infty} \frac{1}{M}\sum_{\tau=0}^{M-1} \expect{p(t)} &\leq& p^* + \frac{B}{V} \label{eq:expect1} \\
\limsup_{M\rightarrow\infty} \frac{1}{M}\sum_{\tau=0}^{M-1} \sum_{k=1}^K\expect{|Q_k(t)|} &\leq& \frac{B + V(p^* - p_{min})}{\epsilon}  \label{eq:expect2} 
\end{eqnarray}
Further, if (\ref{eq:dpp-condition}) holds in the case $V=0$, then inequality (\ref{eq:expect2}) still holds. Likewise, if (\ref{eq:dpp-condition}) holds in the case $\epsilon =0$, then inequality (\ref{eq:expect1}) still holds. 
\end{thm}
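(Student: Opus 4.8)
The plan is to convert the conditional drift-plus-penalty inequality (\ref{eq:dpp-condition}) into an unconditional telescoping recursion and then average over time. First I would take the expectation of both sides of (\ref{eq:dpp-condition}) over the random history $\script{H}(t)$. By the law of iterated expectations, $\expect{\Delta(\script{H}(t))} = \expect{L(\bv{Q}(t+1))} - \expect{L(\bv{Q}(t))}$ and $\expect{V\expect{p(t)|\script{H}(t)}} = V\expect{p(t)}$, so that for every slot $t$,
\begin{equation*}
\expect{L(\bv{Q}(t+1))} - \expect{L(\bv{Q}(t))} + V\expect{p(t)} \leq B + Vp^* - \epsilon\sum_{k=1}^K \expect{|Q_k(t)|}.
\end{equation*}
Before summing, I would record two bookkeeping facts that keep all later manipulations free of $\infty-\infty$ ambiguities: $L(\bv{Q}(t)) \geq 0$ for all $t$ because the weights $w_k$ are positive, and $\expect{L(\bv{Q}(t))} < \infty$ for all $t$, the latter proved by induction on $t$ from the hypothesis $\expect{L(\bv{Q}(0))} < \infty$ together with the displayed inequality (discarding the non-positive terms on its right-hand side).

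Next I would sum the displayed inequality over $\tau \in \{0, 1, \ldots, M-1\}$, so that the Lyapunov terms telescope, yielding
\begin{equation*}
\expect{L(\bv{Q}(M))} - \expect{L(\bv{Q}(0))} + V\sum_{\tau=0}^{M-1}\expect{p(\tau)} \leq M(B + Vp^*) - \epsilon\sum_{\tau=0}^{M-1}\sum_{k=1}^K \expect{|Q_k(\tau)|}.
\end{equation*}
For the penalty bound (\ref{eq:expect1}), I would drop the non-negative quantities $\expect{L(\bv{Q}(M))}$ and $\epsilon\sum_\tau\sum_k\expect{|Q_k(\tau)|}$, divide by $MV$ (using $V>0$), and take $\limsup_{M\rightarrow\infty}$; the residual term $\expect{L(\bv{Q}(0))}/(MV)$ vanishes in the limit since $\expect{L(\bv{Q}(0))} < \infty$. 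For the backlog bound (\ref{eq:expect2}), I would instead invoke $\expect{p(\tau)} \geq p_{min}$ to replace $V\sum_\tau \expect{p(\tau)}$ by the lower bound $MVp_{min}$, drop $\expect{L(\bv{Q}(M))} \geq 0$, solve for $\epsilon\sum_\tau\sum_k\expect{|Q_k(\tau)|}$, divide by $M\epsilon$ (using $\epsilon>0$), and again take $\limsup_{M\rightarrow\infty}$, with the term $\expect{L(\bv{Q}(0))}/(M\epsilon)$ vanishing.

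Finally, for the two supplementary claims: if (\ref{eq:dpp-condition}) holds with $V=0$, the summed inequality above holds with the penalty terms absent, and dividing by $M\epsilon$ and taking $\limsup$ still delivers (\ref{eq:expect2}), which only uses $\epsilon>0$; symmetrically, if (\ref{eq:dpp-condition}) holds with $\epsilon=0$, the backlog sum disappears and the argument for (\ref{eq:expect1}) proceeds verbatim, needing only $V>0$. Since the argument is pure telescoping and averaging, I do not expect a deep obstacle; the only point requiring genuine care is the finiteness and nonnegativity bookkeeping on $\expect{L(\bv{Q}(t))}$ that legitimizes the telescoping sum and the subsequent division steps.
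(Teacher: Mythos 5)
Your proof is correct and follows essentially the same route as the paper: take expectations via iterated expectations, telescope, drop nonnegative terms, and pass to the $\limsup$. The only addition is your explicit induction establishing $\expect{L(\bv{Q}(t))} < \infty$ for all $t$, a bookkeeping step the paper leaves implicit but which is a sound precaution.
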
 

The proof of Theorem \ref{thm:lyap-opt-expectation} requires only three lines and is repeated below to provide intuition: 
Taking expectations of (\ref{eq:dpp-condition}) and using the law of iterated expectations yields the following for all $t \in \{0, 1, 2, \ldots\}$: 
\[ \expect{L(\bv{Q}(t+1))} - \expect{L(\bv{Q}(t))}  + V\expect{p(t)} \leq B  + Vp^* - \epsilon\sum_{k=1}^K\expect{|Q_k(t)|} \]
Summing the above over $t \in \{0, \ldots, M-1\}$ for some integer $M>0$  and dividing by $M$ yields: 
\[ \frac{\expect{L(\bv{Q}(M))} - \expect{L(\bv{Q}(0))}}{M} + V\frac{1}{M}\sum_{t=0}^{M-1}\expect{p(t)} \leq B + Vp^* - \epsilon\frac{1}{M}\sum_{t=0}^{M-1} \sum_{k=1}^K\expect{|Q_k(t)|} \]
Rearranging terms in the above inequality and using the fact that $\expect{L(\bv{Q}(M))} \geq 0$ and $\expect{p(t)} \geq p_{min}$ for all $t$ 
immediately leads to the following two inequalities: 
\begin{eqnarray*}
\frac{1}{M}\sum_{t=0}^{M-1}\expect{p(t)} &\leq& p^* + \frac{B}{V} + \frac{\expect{L(\bv{Q}(0))}}{VM}  \\
\frac{1}{M}\sum_{t=0}^{M-1}\sum_{k=1}^K\expect{|Q_k(t)|} &\leq& \frac{B + V(p^* - p_{min})}{\epsilon} + \frac{\expect{L(\bv{Q}(0))}}{\epsilon M} 
\end{eqnarray*}
Taking a limit of the above inequalities as $M\rightarrow\infty$ yields (\ref{eq:expect1})-(\ref{eq:expect2}). 

\subsection{Main Result of This Paper} 

Theorem \ref{thm:lyap-opt-expectation}  illustrates an important tradeoff between time average expected
penalty and the resulting time average expected 
queue backlog.  However, one may wonder if the same bounds hold with probability 1 for pure time
averages (without the expectations).  To address this question, we impose the following additional 
\emph{boundedness assumptions}: 
\begin{itemize} 
 \item The second moments $\expect{p(t)^2}$ are finite for all $t \in\{0, 1, 2, \ldots\}$ and satisfy: 
 \begin{equation} \label{eq:boundedness1}
 \sum_{\tau=1}^{\infty}\frac{\expect{p(\tau)^2}}{\tau^2}< \infty
 \end{equation} 
 \item There is a finite (possibly negative) constant $p_{min}$ such that for all slots $t$ and all possible $\script{H}(t)$: 
 \begin{equation} \label{eq:b1point5} 
\expect{p(t)|\script{H}(t)}  \geq p_{min} 
 \end{equation} 
\item  There is a finite constant $D>0$ such that for all slots $t$,  all possible $\bv{Q}(t)$, and all $k \in \{1, \ldots, K\}$ the conditional
fourth moments of queue changes are bounded as follows:
\begin{eqnarray} \label{eq:boundedness2}  
\expect{(Q_k(t+1)-Q_k(t))^4|\bv{Q}(t)} \leq D 
\end{eqnarray}
\end{itemize} 

Note that condition (\ref{eq:boundedness1}) holds whenever $\expect{p(t)^2} \leq C$ for all $t$ for some finite constant $C>0$. 
 The following theorem is the main result of this
paper:

\begin{thm} \label{thm:lyap-opt} (Lyapunov Optimization with Pure Time Averages) Assume the boundedness
assumptions (\ref{eq:boundedness1})-(\ref{eq:boundedness2}) hold.  Let $L(\bv{Q}(t))$ be a quadratic Lyapunov function 
of the form (\ref{eq:lyapunov-function}), and assume  the initial queue backlog $\bv{Q}(0)$ is finite with probability 1.   
If the drift-plus-penalty 
condition (\ref{eq:dpp-condition}) is satisfied for all slots $t$ and all possible $\script{H}(t)$ (with finite
constants $B$, $p^*$, $V> 0$, $\epsilon>0$), then: 
\begin{eqnarray}
\limsup_{t\rightarrow\infty} \frac{1}{t}\sum_{\tau=0}^{t-1} p(\tau) &\leq& p^* + \frac{B}{V} \: \: (w.p.1)  \label{eq:expect11} \\
\limsup_{t\rightarrow\infty} \frac{1}{t}\sum_{\tau=0}^{t-1} \sum_{k=1}^K|Q_k(\tau)| &\leq& \frac{B + V(p^* - p_{min})}{\epsilon}  \: \: (w.p.1) \label{eq:expect22} 
\end{eqnarray}
where $(w.p.1)$ stand for ``with probablity 1.'' 
Further, for all $k \in \{1, \ldots, K\}$ we have: 
\begin{equation} \label{eq:lyap-opt-rs}
 \lim_{t\rightarrow\infty} \frac{Q_k(t)}{t} = 0 \: \: (w.p.1) 
 \end{equation} 
Finally, if (\ref{eq:dpp-condition}) holds in the case $V=0$, then inequality (\ref{eq:expect22}) and equality (\ref{eq:lyap-opt-rs})
 still hold.
\end{thm}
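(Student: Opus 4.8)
The plan is to reduce the pure-time-average statement to the time-average-expectation statement plus a law of large numbers for the ``noise'' in the drift. Write $\hat{L}(t) \defequiv L(\bv{Q}(t+1)) - L(\bv{Q}(t))$ for the one-slot change in the Lyapunov function, and decompose it as
\begin{equation*}
\hat{L}(t) = \Delta(\script{H}(t)) + Z(t), \qquad Z(t) \defequiv \hat{L}(t) - \expect{\hat{L}(t)|\script{H}(t)}.
\end{equation*}
By construction $\{Z(t)\}$ is a martingale difference sequence with respect to the filtration $\{\script{H}(t)\}$. Similarly write $p(t) = \expect{p(t)|\script{H}(t)} + W(t)$ with $W(t)$ a martingale difference. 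The drift-plus-penalty condition (\ref{eq:dpp-condition}) then gives, after substituting the decompositions and summing over $\tau \in \{0,\ldots,t-1\}$ and dividing by $t$,
\begin{equation*}
\frac{L(\bv{Q}(t)) - L(\bv{Q}(0))}{t} + \frac{V}{t}\sum_{\tau=0}^{t-1}p(\tau) \leq B + Vp^* - \frac{\epsilon}{t}\sum_{\tau=0}^{t-1}\sum_{k=1}^K |Q_k(\tau)| + \frac{1}{t}\sum_{\tau=0}^{t-1}\bigl(Z(\tau) + VW(\tau)\bigr).
\end{equation*}
If the partial-sum averages of $Z(\tau)$ and $W(\tau)$ both vanish with probability 1, and $L(\bv{Q}(t))/t \to 0$ w.p.1, then rearranging and taking $\limsup$ exactly reproduces (\ref{eq:expect11})--(\ref{eq:expect22}) (using $L(\bv{Q}(t)) \geq 0$, $\expect{p(\tau)|\script{H}(\tau)} \geq p_{min}$, and $L(\bv{Q}(0))$ finite w.p.1). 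This is essentially the ``generalized drift-plus-penalty theorem'' the outline promises in Section \ref{section:time-averages}; I would state and invoke it here.

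The main work is verifying the three w.p.1 limits, which is where the fourth-moment hypothesis (\ref{eq:boundedness2}) enters. For the martingale difference $W(\tau) = p(\tau) - \expect{p(\tau)|\script{H}(\tau)}$, Jensen gives $\expect{W(\tau)^2} \leq \expect{p(\tau)^2}$, so (\ref{eq:boundedness1}) yields $\sum_\tau \expect{W(\tau)^2}/\tau^2 < \infty$, and the Kolmogorov strong law for martingale differences with summable normalized variances gives $\frac{1}{t}\sum_{\tau=0}^{t-1} W(\tau) \to 0$ w.p.1. For $Z(\tau)$, expand $\hat{L}(\tau) = \frac{1}{2}\sum_k w_k\bigl((Q_k(\tau)+\delta_k(\tau))^2 - Q_k(\tau)^2\bigr) = \sum_k w_k\bigl(Q_k(\tau)\delta_k(\tau) + \tfrac12 \delta_k(\tau)^2\bigr)$ where $\delta_k(\tau) \defequiv Q_k(\tau+1)-Q_k(\tau)$. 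The danger is the cross term $Q_k(\tau)\delta_k(\tau)$, since $Q_k(\tau)$ itself can grow; the key observation is that $|Q_k(\tau)| \leq |Q_k(0)| + \sum_{s<\tau}|\delta_k(s)|$, and a Cauchy--Schwarz / Minkowski argument bounds $\expect{Q_k(\tau)^2 \delta_k(\tau)^2}$ by something like $O(\tau^2)$ using the uniform fourth-moment bound $D$ on each $\delta_k(s)$ (expanding the square of the sum, applying Cauchy--Schwarz to each product $\expect{\delta_k(r)\delta_k(s)\delta_k(\tau)^2}$, and bounding each factor by $D^{1/4}$-type quantities). Hence $\expect{Z(\tau)^2} = O(\tau^2)$, so $\sum_\tau \expect{Z(\tau)^2}/\tau^2$ may actually \emph{diverge} --- so a crude Kolmogorov bound is not quite enough, and one must be more careful.

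The cleaner route, and the one I expect the paper takes, is to establish (\ref{eq:expect22}) and $L(\bv{Q}(t))/t \to 0$ \emph{first}, via a self-bootstrapping argument, rather than trying to bound $Z(\tau)$ uniformly. One can first derive strong stability in expectation from Theorem \ref{thm:lyap-opt-expectation}, which gives $\frac{1}{t}\sum_{\tau<t}\expect{\sum_k|Q_k(\tau)|} = O(1)$, hence $\expect{|Q_k(\tau)|} = O(1)$ on average, and then --- since on the event that the queue averages stay bounded the relevant $Q_k(\tau)$ in the cross term are themselves $O(1)$-in-probability in a summable sense --- show $\expect{Z(\tau)^2}$ is effectively $O(1)$ along the relevant subsequence, making Kolmogorov applicable. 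More precisely: apply the law of large numbers to the \emph{stopped} or \emph{truncated} process, or argue that $\sum_\tau \frac{1}{\tau^2}\expect{Z(\tau)^2 \mathbf{1}\{\text{averages bounded up to }\tau\}}$ converges because of the expectation bound (\ref{eq:expect2}), then use that the ``averages bounded'' event has probability approaching 1. The rate-stability conclusion (\ref{eq:lyap-opt-rs}) then follows from (\ref{eq:expect22}): a nonnegative sequence with bounded Cesàro average and with $Q_k(\tau+1)^2 - Q_k(\tau)^2 = 2Q_k(\tau)\delta_k(\tau) + \delta_k(\tau)^2$ whose increments have vanishing $\ell^2$-normalized partial sums must satisfy $Q_k(t)^2/t^2 \to 0$, i.e. $Q_k(t)/t \to 0$; alternatively, $L(\bv{Q}(t))/t \to 0$ w.p.1 combined with the definition of $L$ gives it directly. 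The $V=0$ addendum is immediate since the penalty terms drop out and only (\ref{eq:expect22})--(\ref{eq:lyap-opt-rs}), which do not reference $V$ in their derivation beyond $V \geq 0$, are claimed. The main obstacle, then, is precisely this interplay: the fourth-moment bound controls increments but not the (growing) queue values appearing in the drift's cross term, so the $Z(\tau)$ variances are not summable-after-normalization on their face, and one needs the a-priori expectation bounds of Theorem \ref{thm:lyap-opt-expectation} to tame them before the Kolmogorov SLLN can be applied.
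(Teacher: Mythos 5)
Your decomposition into martingale differences $Z(t)$ and $W(t)$ and the reduction to a martingale SLLN is essentially the same skeleton the paper uses (it packages $\delta(t)+\beta(t)$ directly into Corollary~\ref{lem:supermartingale} via Theorem~\ref{thm:dpp-general}, rather than centering $\hat L$ and $p$ separately, but that is cosmetic). You have also correctly identified the real obstruction: the cross term $Q_k(\tau)d_k(\tau)$ makes $\expect{Z(\tau)^2}$ grow with $\tau$, so the Kolmogorov condition $\sum_\tau\expect{Z(\tau)^2}/\tau^2<\infty$ is not free. What you need, and what the paper's Lemma~\ref{lem:finite-var} reduces everything to, is
\[
\sum_{t=1}^\infty \frac{\expect{Q_k(t)^2}}{t^2}<\infty \quad\text{for all }k.
\]

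The gap is in how you propose to obtain this. You invoke Theorem~\ref{thm:lyap-opt-expectation} to get strong stability in expectation, i.e.\ $\frac{1}{M}\sum_{t<M}\expect{|Q_k(t)|}=O(1)$, and then wave at a ``stopped/truncated process'' or an indicator $\mathbf{1}\{\text{averages bounded up to }\tau\}$ to conclude $\expect{Z(\tau)^2}$ is ``effectively $O(1)$.'' That step does not go through: a time-averaged \emph{first}-moment bound on $|Q_k(t)|$ does not control $\expect{Q_k(t)^2}$ in any summable sense, and conditioning on a favorable event does not turn first moments into second moments (nor does it preserve the martingale-difference property needed for Kolmogorov). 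The quantity $\expect{Q_k(t)^2d_k(t)^2}$ is bounded (via iterated expectations and the conditional fourth-moment hypothesis) by $\sqrt{D}\,\expect{Q_k(t)^2}$, so you genuinely need a handle on second moments, not first moments.

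The missing idea is the paper's Theorem~\ref{thm:quad-lyap}(a)--(b): the drift inequality $\Delta(\script{H}(t))\le \tilde B-\epsilon\sum_k|Q_k(t)|$ (with $\tilde B=B+V(p^*-p_{min})$) implies, by taking square roots, a \emph{negative drift for the norm itself}: $\expect{\norm{\bv{Q}(t+1)}\mid\bv{Q}(t)}\le\norm{\bv{Q}(t)}-c$ whenever $\norm{\bv{Q}(t)}\ge a$. One then runs a fourth-power Lyapunov analysis on $\norm{\bv{Q}(t)}^4$: the conditional fourth-moment bound on increments makes the lower-order terms in the binomial expansion $O(\norm{\bv{Q}(t)}^2)$, while the dominant term is $-4c\norm{\bv{Q}(t)}^3$ from part (a). Telescoping yields $\frac{1}{M}\sum_{t<M}\expect{\norm{\bv{Q}(t)}^3}\le b$, hence $\sum_{t<M}\expect{Q_k(t)^2}\le CM$, and a Kronecker-type lemma (Lemma~\ref{lem:appd}) converts this linear-growth bound into $\sum_t\expect{Q_k(t)^2}/t^2<\infty$. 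This bootstrap---trading the fourth-moment increment assumption for a third-moment time-average bound on the state---is precisely what your sketch is missing, and without it the martingale SLLN cannot be applied to the drift noise. (Once that summability is in hand, your remaining steps, including $W$, the limsup algebra, rate stability via Corollary~\ref{cor:rate-stable}, and the $V=0$ addendum, all go through as you describe.)
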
 

A more detailed upper bound on time average queue backlog is provided in (\ref{eq:togethernow}) of the proof.

\subsection{Queue Stability} 
A special case of Theorem \ref{thm:lyap-opt} is when the fourth moment condition
(\ref{eq:boundedness2}) is satisfied and when 
the following drift condition holds for all $t$ and all $\script{H}(t)$: 
\begin{equation} \label{eq:queue-stability} 
 \Delta(\script{H}(t)) \leq B  - \epsilon\sum_{k=1}^K|Q_k(t)| 
 \end{equation} 
where $B>0$ and $\epsilon>0$.  This is a special case of (\ref{eq:dpp-condition}) with $V=0$ and $p(t) = p^* = 0$.
In this case we have that all queues $Q_k(t)$ in the system satisfy: 
\begin{eqnarray}
\limsup_{M\rightarrow\infty}\frac{1}{M}\sum_{t=0}^M \expect{|Q_k(t)|} &\leq& B/\epsilon \label{eq:stab1} \\
\limsup_{M\rightarrow\infty} \frac{1}{M}\sum_{t=0}^M |Q_k(t)| &\leq& B/\epsilon \: \: (w.p.1) \label{eq:stab2} \\
\lim_{q\rightarrow\infty} \left[\limsup_{M\rightarrow\infty}\frac{1}{M} \sum_{t=0}^{M-1}Pr[|Q_k(t)|>q] \right]&=& 0 \label{eq:stab3} \\
\lim_{q\rightarrow\infty} \left[\limsup_{M\rightarrow\infty}\frac{1}{M} \sum_{t=0}^{M-1}1\{|Q_k(t)|>q\}\right] &=& 0 \: \: (w.p.1) \label{eq:stab4} \\
\lim_{t\rightarrow\infty} \expect{|Q_k(t)|}/t  &=& 0 \label{eq:stab5} \\
\lim_{t\rightarrow\infty} Q_k(t)/t &=& 0 \: \: (w.p.1) \label{eq:stab6}
\end{eqnarray}
where $1\{|Q_k(t)|>q\}$ is an indicator function that is $1$ if $|Q_k(t)|>q$, and $0$ else. The above are 6 major forms of queue
stability. The inequality  (\ref{eq:stab1}) is often called \emph{strong stability}, and holds by Theorem \ref{thm:lyap-opt-expectation}. 
Its sample path version is inequality (\ref{eq:stab2}), and this holds by Theorem \ref{thm:lyap-opt}.  The inequality 
(\ref{eq:stab1}) can easily be used to prove (\ref{eq:stab3}) via 
the fact that $|Q_k(t)| \geq q1\{Q_k(t)>q\}$, and the same fact can easily prove 
that (\ref{eq:stab2}) implies  (\ref{eq:stab4}).  The stability definition (\ref{eq:stab5}) is called \emph{mean rate stability}, 
and does not follow from 
any of the above results, but follows from Theorem \ref{thm:rs-main} given below. 
The stability definition (\ref{eq:stab6}) is a sample path version called \emph{rate stability}, and is implied by 
Theorem \ref{thm:lyap-opt}.   Relationships between these various stability definitions are discussed in \cite{stability}. 
In summary, if  changes in queue backlogs have uniformly bounded conditional 
fourth moments (so that
(\ref{eq:boundedness2}) holds), and if the 
Lyapunov drift condition (\ref{eq:queue-stability}) 
holds for a quadratic Lyapunov function, then 
all queues in the network satisfy all of the major forms of stability. 

The following useful theorem shows that in the special case $\epsilon =0$, the condition (\ref{eq:queue-stability}) 
still implies rate stability and mean rate stability, 
regardless of whether or not conditional fourth moments are bounded. 

\begin{thm} \label{thm:rs-main}  (Rate Stability and Mean Rate Stability)  Let $L(\bv{Q}(t))$ be a quadratic Lyapunov
function of the form (\ref{eq:lyapunov-function}). 
Suppose there is a finite constant $B>0$ such that for all $\tau \in \{0, 1, 2, \ldots\}$ and all possible $\script{H}(\tau)$, 
we have:\footnote{The same results for Theorem \ref{thm:rs-main} hold if the requirement
``$\Delta(\script{H}(t)) \leq B$'' (which conditions on the
full history $\script{H}(t)$), is replaced with 
``$\expect{L(\bv{Q}(t+1)) - L(\bv{Q}(t))|\bv{Q}(t)} \leq B$'' (which conditions only on $\bv{Q}(t)$).} 
\begin{equation*} 
\Delta(\script{H}(\tau)) \leq B 
\end{equation*} 
Then: 

(a) If $\expect{L(\bv{Q}(0))} < \infty$, then $Q_k(t)$ is mean rate stable for all $k \in \{1, \ldots, K\}$.  That is: 
\[ \lim_{t\rightarrow\infty} \expect{|Q_k(t)|}/t = 0 \]

(b) If $\bv{Q}(0)$ is finite with probability 1, and if 
there is a finite constant $D>0$ such that for all $t\in\{0, 1, 2, \ldots\}$ and all $k \in \{1, \ldots, K\}$
we have:
 \[ \expect{(Q_k(t+1) - Q_k(t))^2} \leq D \]
 then $Q_k(t)$ is rate stable for all $k \in \{1, \ldots, K\}$.  That is: 
 \[ \lim_{t\rightarrow\infty} Q_k(t)/t = 0 \: \: (w.p.1) \]
\end{thm}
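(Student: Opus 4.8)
\emph{A strategy for the proof.} For part (a), the plan is to telescope the drift hypothesis into a linear-in-$t$ bound on the second moment of each queue. Taking the full expectation of $\Delta(\script{H}(\tau)) \leq B$ and using the law of iterated expectations gives $\expect{L(\bv{Q}(\tau+1))} - \expect{L(\bv{Q}(\tau))} \leq B$; since $\expect{L(\bv{Q}(0))} < \infty$ an induction keeps every term finite, and summing over $\tau \in \{0,\ldots,t-1\}$ gives $\expect{L(\bv{Q}(t))} \leq \expect{L(\bv{Q}(0))} + Bt$. As $L(\bv{Q}(t)) = \tfrac{1}{2}\sum_k w_k Q_k(t)^2$ with $w_k>0$, this yields $\expect{Q_k(t)^2} \leq (2/w_k)(\expect{L(\bv{Q}(0))} + Bt)$, and Jensen's inequality then gives $\expect{|Q_k(t)|} \leq \sqrt{\expect{Q_k(t)^2}} = O(\sqrt{t})$; dividing by $t$ proves mean rate stability. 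Only the drift bound and $L \geq 0$ are used.

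For part (b), the same computation shows that \emph{whenever} $\expect{L(\bv{Q}(0))} < \infty$ one has $\expect{Q_k(t)^2} \leq c_1 + c_2 t$ for finite constants $c_1,c_2$, so $Q_k(t)/t \to 0$ in $L^2$ and in probability; the task is to upgrade this to a statement holding with probability $1$. I would first clear away the integrability issue (part (b) only assumes $\bv{Q}(0)$ finite w.p.1): partition the sure event $\{\bv{Q}(0) \text{ finite}\}$ into $E_n \defequiv \{n-1 \leq \norm{\bv{Q}(0)} < n\}$. Each $E_n$ is $\script{H}(0)$-measurable, hence $\script{H}(\tau)$-measurable for all $\tau$, so conditioning $\Delta(\script{H}(\tau)) \leq B$ on $E_n$ preserves the inequality in the form $\expect{L(\bv{Q}(\tau+1)) - L(\bv{Q}(\tau)) \mid E_n} \leq B$, while $\expect{L(\bv{Q}(0))\mid E_n} < \infty$ and $\expect{(Q_k(\tau+1)-Q_k(\tau))^2 \mid E_n} \leq D/Pr[E_n] < \infty$. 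Hence it suffices to prove $Q_k(t)/t \to 0$ w.p.1 conditioned on each $E_n$ with $Pr[E_n] > 0$ and then take the union over $n$; equivalently, I may assume $\expect{L(\bv{Q}(0))} < \infty$ from now on, with $\expect{Q_k(t)^2} \leq c_1 + c_2 t$.

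Then I would run a subsequence-plus-oscillation argument. A direct Borel--Cantelli over all $t$ fails because $\sum_t \expect{Q_k(t)^2}/t^2 = \sum_t O(1/t)$ diverges, so I would thin to $t_n = n^2$: by Chebyshev and $\expect{Q_k(n^2)^2} = O(n^2)$, $\sum_n Pr[\,|Q_k(n^2)| > \varepsilon n^2\,] = \sum_n O(1/n^2) < \infty$ for each $\varepsilon>0$, so Borel--Cantelli gives $Q_k(n^2)/n^2 \to 0$ w.p.1. For $t$ in a block $n^2 \leq t < (n+1)^2$, the deviation from the anchor is bounded by the single random variable $R_n \defequiv \sum_{\tau=n^2}^{(n+1)^2-2}|Q_k(\tau+1)-Q_k(\tau)|$, which has $O(n)$ summands, so by Cauchy--Schwarz and $\expect{(Q_k(\tau+1)-Q_k(\tau))^2}\leq D$ one gets $\expect{R_n^2}=O(n^2)$; a second Chebyshev--Borel--Cantelli pass gives $R_n/n^2 \to 0$ w.p.1. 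Since $|Q_k(t)|/t \leq (|Q_k(n^2)| + R_n)/n^2$ on the block, letting $t\to\infty$ (so $n\to\infty$) proves $Q_k(t)/t \to 0$ w.p.1. The case $\epsilon=0$ requires nothing further, as only $\Delta(\script{H}(\tau)) \leq B$ was used.

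The main obstacle is this last step: converting an $L^2$ growth rate into an almost-sure $o(t)$ bound. The estimate $\expect{Q_k(t)^2}=O(t)$ lies exactly at the threshold where summing $\expect{Q_k(t)^2}/t^2$ over all $t$ diverges, which forces a two-scale argument --- thin to a subsequence $t_n$ sparse enough that $\sum_n \expect{Q_k(t_n)^2}/t_n^2$ converges (any $t_n = n^\alpha$ with $\alpha>1$ works, but a geometric subsequence does not, since then the within-block oscillation is itself of order $t_n$) and separately bound that within-block oscillation. A secondary, bookkeeping obstacle is the $\bv{Q}(0)$-integrability gap between (a) and (b), handled by the localization onto the $E_n$. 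I note that one could instead decompose $Q_k(\tau+1)-Q_k(\tau) = \mu_k(\tau) + X_k(\tau)$ into its $\script{H}(\tau)$-conditional mean and a martingale difference and apply the Kolmogorov law of large numbers of Section~\ref{section:time-averages} to get $\tfrac{1}{t}\sum_{\tau<t}X_k(\tau)\to 0$ w.p.1 directly from $\expect{X_k(\tau)^2}\leq D$; but the predictable part $\tfrac{1}{t}\sum_{\tau<t}\mu_k(\tau)$ must still be controlled through the drift-based $O(t)$ bound on $\expect{Q_k(t)^2}$, so this variant does not avoid the core difficulty.
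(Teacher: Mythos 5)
Your proof is correct and follows the same overall architecture as the paper's Appendix~E: telescope the drift to obtain $\expect{Q_k(t)^2} = O(t)$; thin to a sparse polynomial subsequence on which Chebyshev plus Borel--Cantelli gives $Q_k(t_n)/t_n \to 0$ almost surely (the paper's Lemma~\ref{lem:rs1}); and then control the within-block oscillation using the second-moment bound on one-step increments (the paper's Lemma~\ref{lem:rs2}). Two technical choices differ, each slightly in your favor. First, your localization onto the $\script{H}(0)$-measurable partition $E_n = \{n-1 \leq \norm{\bv{Q}(0)} < n\}$ disposes of the integrability gap between parts (a) and (b) more carefully than the paper's ``assume $\bv{Q}(0)$ is a deterministic constant, then note the bound holds for any such constant'' step, which implicitly rests on a regular conditional probability given $\bv{Q}(0)$; the partition argument sidesteps that subtlety entirely. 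Second, your within-block control is different in mechanism: you bound the block oscillation by the single aggregate $R_n = \sum_{\tau=n^2}^{(n+1)^2-2}|Q_k(\tau+1)-Q_k(\tau)|$ over the $2n$ steps of the block, use Cauchy--Schwarz to get $\expect{R_n^2}=O(n^2)$, and run a second Chebyshev--Borel--Cantelli pass to obtain $R_n/n^2 \to 0$ w.p.1, while the paper's Lemma~\ref{lem:rs2} establishes a per-step almost-sure bound $|Q(t+1)-Q(t)| < t^{3/4}$ eventually (Markov plus Borel--Cantelli), then deterministically multiplies this by the block length $t_{n+1}-t_n=O(n^\delta)$ and verifies the power-counting condition $\delta + \tfrac{3}{4}(1+\delta) < 1$. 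Your version with $t_n = n^2$ avoids the $t^{3/4}$ tuning and the explicit constraint on $\delta$, at the modest cost of the Cauchy--Schwarz aggregate estimate; both routes are sound and give the same conclusion.
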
 
\begin{proof}
See Appendix E.
\end{proof}

Theorem \ref{thm:rs-main} only requires the (unconditional) second moment of queue changes to be bounded, whereas
Theorem \ref{thm:lyap-opt} requires (conditional) fourth moments to be bounded. 

\section{Convergence of Time Averages}  \label{section:time-averages} 

This section reviews basic convergence definitions and results needed in the proof of Theorem \ref{thm:lyap-opt}.
It then develops a generalized drift-plus-penalty result for processes with a certain variance property. 

\subsection{Discussion of Convergence With Probability 1} \label{section:wp1}
Let $Y(t)$ be a real-valued stochastic process defined on $t \in \{0, 1, 2, \ldots\}$.  
To say that $Y(t)$ converges to a constant
$\alpha \in \mathbb{R}$ ``with probability 1'' (or ``almost surely''), we use the notation: 
\begin{equation} \label{eq:wp1} 
\lim_{t\rightarrow\infty} Y(t) = \alpha \: \: \: (w.p.1) 
\end{equation} 
It is well known that  (\ref{eq:wp1}) holds if and only if for all $\epsilon>0$ we have: 
\begin{equation} \label{eq:union} 
\lim_{n\rightarrow\infty} Pr[\cup_{t\geq n} \{|Y(t)-\alpha| \geq \epsilon\}] = 0 
\end{equation} 
Probabilities of the type (\ref{eq:union}) can be bounded via the union
bound: 
\begin{equation} 
 0 \leq Pr[\cup_{t\geq n} \{|Y(t)-\alpha|\geq \epsilon\}] \leq \sum_{t=n}^{\infty} Pr[|Y(t)-\alpha| \geq \epsilon] \label{eq:union2} 
 \end{equation} 
It follows that (\ref{eq:union}) holds if the  infinite sum on the right-hand-side of (\ref{eq:union2}) is the tail of a convergent series.   
Bounds on each term of the series can be obtained via the well known Chebyshev inequality:
\[ Pr[|Y(t) - \alpha|\geq \epsilon] \leq \frac{\expect{(Y(t)-\alpha)^2}}{\epsilon^2} \]
The above discussion explains the following well known lemma: 
\begin{lem} \label{lem:wp1} 
If $Y(t)$ satisfies the following: 
\[ \sum_{t=1}^{\infty} \expect{(Y(t)-\alpha)^2} < \infty \]
then (\ref{eq:wp1}) holds, that is, the variables $Y(t)$ converge to $\alpha$ with probability 1. 
\end{lem}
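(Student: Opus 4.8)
\textbf{Proof proposal for Lemma \ref{lem:wp1}.} The plan is to assemble exactly the three ingredients displayed just above the statement — Chebyshev's inequality, the union bound, and the equivalence between (\ref{eq:wp1}) and (\ref{eq:union}) — into a short argument. First I would fix an arbitrary $\epsilon>0$ and apply Chebyshev's inequality termwise to obtain $Pr[|Y(t)-\alpha|\geq\epsilon]\leq \expect{(Y(t)-\alpha)^2}/\epsilon^2$ for every $t\geq 1$. Summing this from $t=n$ to $\infty$ and combining with the union bound (\ref{eq:union2}) gives
\[
0\leq Pr\!\left[\cup_{t\geq n}\{|Y(t)-\alpha|\geq\epsilon\}\right]\leq \frac{1}{\epsilon^2}\sum_{t=n}^{\infty}\expect{(Y(t)-\alpha)^2}.
\]

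Next I would invoke the hypothesis $\sum_{t=1}^{\infty}\expect{(Y(t)-\alpha)^2}<\infty$: since this series converges, its tail $\sum_{t=n}^{\infty}\expect{(Y(t)-\alpha)^2}$ tends to $0$ as $n\rightarrow\infty$. Hence the right-hand side above tends to $0$, and by the sandwich the middle term does too, so $\lim_{n\rightarrow\infty}Pr[\cup_{t\geq n}\{|Y(t)-\alpha|\geq\epsilon\}]=0$. Because $\epsilon>0$ was arbitrary, condition (\ref{eq:union}) holds for all $\epsilon>0$, which is precisely the stated characterization of (\ref{eq:wp1}); this yields $\lim_{t\rightarrow\infty}Y(t)=\alpha$ with probability 1 and completes the proof.

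There is essentially no hard step here: the lemma is a standard consequence of the three facts already recorded in the text, and the only place requiring any care is making sure the quantities are genuinely summable before taking tails, which is immediate from the hypothesis. If one wanted to be self-contained about the equivalence (\ref{eq:wp1})$\Leftrightarrow$(\ref{eq:union}) rather than citing it, the mild extra work would be a countable-union argument over $\epsilon=1/k$, $k\in\{1,2,\ldots\}$, but since that equivalence is stated as known earlier in the section I would simply use it.
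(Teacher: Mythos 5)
Your proof is correct and assembles exactly the ingredients the paper lays out in the discussion immediately preceding the lemma (Chebyshev termwise, the union bound (\ref{eq:union2}), and the equivalence of (\ref{eq:wp1}) with (\ref{eq:union})); the paper itself gives no separate proof, stating that "the above discussion explains" the lemma, and your write-up is precisely that discussion made formal.
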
 

\begin{cor} \label{cor:rate-stable}  (Rate Stability in Queues with Finite Variance) If $Q(t)$ is a real-valued
stochastic process defined over slots $t \in \{0, 1, 2, \ldots\}$ that satisfies: 
\[ \sum_{t=1}^{\infty} \frac{\expect{Q(t)^2}}{t^2} < \infty \]
then: 
\[\lim_{t\rightarrow\infty} \frac{Q(t)}{t} = 0 \: \: (w.p.1) \]
In particular, this holds whenever there is a finite constant $C>0$ such that $\expect{Q(t)^2} \leq C$ for all $t$.  
\end{cor}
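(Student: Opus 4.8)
The statement to prove is Corollary~\ref{cor:rate-stable}, which asserts that if $\sum_{t=1}^\infty \expect{Q(t)^2}/t^2 < \infty$, then $Q(t)/t \to 0$ with probability~1.

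The plan is to apply Lemma~\ref{lem:wp1} to the rescaled process $Y(t) \defequiv Q(t)/t$ with target $\alpha = 0$. First I would observe that for each $t \geq 1$, $\expect{(Y(t) - 0)^2} = \expect{Q(t)^2/t^2} = \expect{Q(t)^2}/t^2$. Summing over $t \geq 1$ gives $\sum_{t=1}^\infty \expect{(Y(t))^2} = \sum_{t=1}^\infty \expect{Q(t)^2}/t^2$, which is finite by hypothesis. Lemma~\ref{lem:wp1} then immediately yields $\lim_{t\to\infty} Y(t) = 0$ with probability~1, i.e. $\lim_{t\to\infty} Q(t)/t = 0$ with probability~1.

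For the final sentence, I would note that if $\expect{Q(t)^2} \leq C$ for all $t$, then $\sum_{t=1}^\infty \expect{Q(t)^2}/t^2 \leq C\sum_{t=1}^\infty 1/t^2 = C\pi^2/6 < \infty$, so the hypothesis of the corollary is met and the conclusion follows. One should take a moment to confirm that $t=0$ causes no difficulty: the summation in the hypothesis starts at $t=1$, and convergence of $Q(t)/t$ is an asymptotic statement that does not involve $t=0$, so the (possibly random but a.s.\ finite) value $Q(0)$ is irrelevant.

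There is no real obstacle here — the corollary is an immediate specialization of Lemma~\ref{lem:wp1}, and the only thing worth being careful about is the bookkeeping of indices and the interchange $\expect{Q(t)^2/t^2} = \expect{Q(t)^2}/t^2$, which is valid because $t$ is a deterministic constant for each fixed term.
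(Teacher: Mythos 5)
Your proof is correct, and it is essentially the argument the paper intends. One small discrepancy worth noting: the paper's own proof writes ``by defining $Y(t) \defequiv Q(t)^2/t^2$ and $\alpha = 0$,'' but if you feed that $Y(t)$ into Lemma~\ref{lem:wp1} as literally stated (Chebyshev, so requiring $\sum_t \expect{Y(t)^2} < \infty$), you would need fourth-moment control $\sum_t \expect{Q(t)^4}/t^4 < \infty$, which is not in the hypothesis. Your choice $Y(t) \defequiv Q(t)/t$ is the one that makes Lemma~\ref{lem:wp1} apply verbatim, since $\sum_t \expect{Y(t)^2} = \sum_t \expect{Q(t)^2}/t^2$ is exactly the assumed finite sum; the paper's $Q(t)^2/t^2$ reads as a typo for this (or as an implicit appeal to a Markov-inequality variant of the lemma applied to the nonnegative process $Q(t)^2/t^2$, which also works but is not what Lemma~\ref{lem:wp1} states). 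In short, same approach, and your instantiation is the cleaner one.
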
 
\begin{proof} 
This corollary follows as an immediate consequence of Lemma \ref{lem:wp1} by defining 
$Y(t) \defequiv Q(t)^2/t^2$ and $\alpha = 0$.   The special case when $\expect{Q(t)^2} \leq C$
follows because $\sum_{t=1}^{\infty} \frac{C}{t^2} < \infty$. 
\end{proof} 

\subsection{Time Averages and the Kolmogorov Strong Law for Martingale Differences}

Let 
$X(t)$ be a real-valued stochastic process 
defined over timeslots $t\in\{0, 1, 2, \ldots\}$.   Define the \emph{history} $\script{H}_X(t)$
to be the set of values of the process before slot $t$, so that $\script{H}_X(0)$ is the empty set, and for all slots $t>0$ we have: 
\begin{equation} \label{eq:history} 
\script{H}_X(t) \defequiv \{X(0), X(1), \ldots, X(t-1)\} 
\end{equation}
We first assume the process $X(t)$ has the property $\expect{X(t)|\script{H}_X(t)} = 0$ for all $t$ and all possible
$\script{H}_X(t)$.  Such processes are called \emph{martingale differences}. 
The following theorem is 
a well known variation on the Kolmogorov strong law of large numbers.

\begin{thm} \label{lem:martingale} (Kolmogorov strong law for martingale differences \cite{williams-martingale}\cite{olav-martingale}\cite{mart-approx-filtration}) 
Suppose that $X(t)$ is a stochastic process over $t \in \{0, 1, 2, \ldots\}$ such that: 
\begin{itemize} 
\item $\expect{X(t)|\script{H}_X(t)} = 0$ for all $t$ and all $\script{H}_X(t)$, where $\script{H}_X(t)$ is 
defined in (\ref{eq:history}). 
\item The second moments $\expect{X(t)^2}$ are finite for all $t$ and satisfy: 
\begin{equation} \label{eq:vc2}  
 \sum_{t=1}^{\infty} \frac{\expect{X(t)^2}}{t^2} < \infty 
 \end{equation} 
\end{itemize} 
Then: 
\[ \lim_{t\rightarrow\infty} \frac{1}{t}\sum_{\tau=0}^{t-1} X(\tau) = 0 \: \: \: (w.p.1) \] 
\end{thm}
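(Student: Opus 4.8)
The plan is to derive the result in the classical style of Kolmogorov's proof of the strong law, by combining the martingale convergence theorem with Kronecker's lemma. First I would dispose of the $\tau=0$ term: since $\expect{X(0)^2}<\infty$, the random variable $X(0)$ is finite with probability 1, so $X(0)/t\rightarrow 0$ surely and it suffices to prove $\frac{1}{t}\sum_{\tau=1}^{t-1}X(\tau)\rightarrow 0$ (w.p.1).

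Next I would introduce the weighted partial sums $M_n\defequiv\sum_{t=1}^{n}X(t)/t$ and show that $\{M_n\}$ is an $L^2$-bounded martingale with respect to the filtration $\mathcal{F}_n\defequiv\sigma(X(0),X(1),\ldots,X(n))$. The martingale property is immediate from the hypothesis $\expect{X(t)|\script{H}_X(t)}=0$: for $t\geq 1$ the set $\script{H}_X(t)$ generates $\mathcal{F}_{t-1}$, so $\expect{M_n-M_{n-1}|\mathcal{F}_{n-1}}=\frac{1}{n}\expect{X(n)|\mathcal{F}_{n-1}}=0$. The same property makes the increments $X(s)/s$ and $X(t)/t$ orthogonal in $L^2$ whenever $s\neq t$, so that $\expect{M_n^2}=\sum_{t=1}^{n}\expect{X(t)^2}/t^2$, and hypothesis (\ref{eq:vc2}) bounds this uniformly in $n$ by a finite constant. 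Hence $\{M_n\}$ is an $L^2$-bounded (in particular $L^1$-bounded) martingale, and by the martingale convergence theorem it converges with probability 1 to a finite limit. If one prefers to keep the argument self-contained within the paper, this last step can instead be obtained from Doob's $L^2$ maximal inequality applied to the increments $M_n-M_m$ together with the Chebyshev and union-bound reasoning already used in Section \ref{section:wp1}: passing to a subsequence $n_k$ along which the tail sums $\sum_{t>n_k}\expect{X(t)^2}/t^2$ are summable shows that $\{M_n\}$ is Cauchy with probability 1.

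Finally I would invoke Kronecker's lemma, which states that if $b_t$ is positive and increases to infinity and $\sum_t a_t/b_t$ converges, then $b_n^{-1}\sum_{t=1}^{n}a_t\rightarrow 0$. Applied pathwise with $b_t=t$ and $a_t=X(t)$, on the probability-one event where $\sum_t X(t)/t=\lim_n M_n$ exists and is finite, this yields $\frac{1}{n}\sum_{t=1}^{n}X(t)\rightarrow 0$; together with the first step this is exactly the claimed conclusion $\frac{1}{t}\sum_{\tau=0}^{t-1}X(\tau)\rightarrow 0$ (w.p.1).

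The routine portions are the handling of the $\tau=0$ term, the orthogonality computation, and the elementary estimates behind Kronecker's lemma. The one genuinely nontrivial input, and the step I expect to be the crux, is the \emph{almost-sure} convergence of the $L^2$-bounded martingale $\{M_n\}$: its mean-square convergence is trivial from hypothesis (\ref{eq:vc2}), but upgrading that to convergence with probability 1 requires a maximal inequality (equivalently, the martingale convergence theorem). This is the only ingredient not already developed earlier in the paper, and it is precisely the place where the martingale-difference structure — rather than mere summability of the variances, as in Lemma \ref{lem:wp1} — is essential.
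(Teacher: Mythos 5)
The paper does not give a proof of this theorem; it is stated as a known background result with citations to \cite{williams-martingale}, \cite{olav-martingale}, and \cite{mart-approx-filtration}, so there is no in-paper argument to compare against. Your proof is the standard textbook argument that those references use: orthogonality of the martingale-difference increments makes $M_n=\sum_{t=1}^n X(t)/t$ an $L^2$-bounded martingale with $\expect{M_n^2}\le\sum_{t=1}^{\infty}\expect{X(t)^2}/t^2<\infty$; almost-sure convergence of $M_n$ then follows from the martingale convergence theorem (or, equivalently and more self-containedly, from Doob's $L^2$ maximal inequality together with the summable tail, as you note); and Kronecker's lemma applied pathwise, with $b_t=t$, converts almost-sure convergence of $\sum_t X(t)/t$ into $\frac{1}{n}\sum_{t\le n}X(t)\to 0$, after which the $\tau=0$ term and the off-by-one in the upper index are trivial. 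Your identification of the almost-sure convergence of the $L^2$-bounded martingale as the crux is exactly right: that is the step where the martingale structure is essential and where the simple Chebyshev-plus-union-bound device behind Lemma \ref{lem:wp1} does not suffice, because the raw partial sums $\sum_{\tau\le t}X(\tau)$ need not have summable tail variances. The proof is correct.
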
 

The following corollary follows easily from the Kolmogorov strong law given above. 

\begin{cor} \label{lem:supermartingale}
Let $X(t)$ be a stochastic process defined over slots $t \in \{0, 1, 2, \ldots\}$, and suppose that:  
\begin{itemize} 
\item There is a finite constant $B$ such that 
$\expect{X(t)|\script{H}_X(t)} \leq B$ for all $t$ and all $\script{H}_X(t)$, where the history $\script{H}_X(t)$ is
defined in (\ref{eq:history}). 
\item The second moments $\expect{X(t)^2}$ are finite for all $t$ and satisfy: 
\begin{equation*} 
 \sum_{t=1}^{\infty} \frac{\expect{X(t)^2}}{t^2} < \infty 
 \end{equation*} 
\end{itemize} 
Then: 
\[ \limsup_{t\rightarrow\infty} \frac{1}{t}\sum_{\tau=0}^{t-1} X(\tau) \leq B \: \: \: (w.p.1) \] 
\end{cor}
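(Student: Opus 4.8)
The plan is to reduce the claim to the martingale-difference strong law (Theorem \ref{lem:martingale}) by subtracting off the conditional means. Define, for each slot $t$,
\[ \avg{X}(t) \defequiv X(t) - \expect{X(t)|\script{H}_X(t)} . \]
First I would check that $\avg{X}(t)$ satisfies the martingale-difference hypothesis with respect to its own natural history. By construction $\expect{\avg{X}(t)|\script{H}_X(t)} = 0$ for all $t$ and all $\script{H}_X(t)$. Since the history $\script{H}_{\avg{X}}(t) = \{\avg{X}(0), \ldots, \avg{X}(t-1)\}$ is a deterministic function of $\script{H}_X(t)$ (each $\avg{X}(\tau)$ with $\tau<t$ is determined by $X(0), \ldots, X(\tau)$), the tower property of conditional expectation gives $\expect{\avg{X}(t)|\script{H}_{\avg{X}}(t)} = 0$, so the first condition of Theorem \ref{lem:martingale} holds for $\avg{X}(t)$.

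Next I would verify the second-moment condition (\ref{eq:vc2}) for $\avg{X}(t)$. Using $(a-b)^2 \leq 2a^2 + 2b^2$, then the conditional Jensen inequality $\expect{X(t)|\script{H}_X(t)}^2 \leq \expect{X(t)^2|\script{H}_X(t)}$, and finally the law of iterated expectations, one obtains $\expect{\avg{X}(t)^2} \leq 4\expect{X(t)^2}$. Hence
\[ \sum_{t=1}^{\infty} \frac{\expect{\avg{X}(t)^2}}{t^2} \leq 4\sum_{t=1}^{\infty} \frac{\expect{X(t)^2}}{t^2} < \infty \]
by the hypothesis on $X(t)$. Theorem \ref{lem:martingale} then yields $\lim_{t\rightarrow\infty}\frac{1}{t}\sum_{\tau=0}^{t-1}\avg{X}(\tau) = 0$ with probability 1.

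Finally I would decompose the time average of $X$ itself and use the drift-type bound termwise:
\[ \frac{1}{t}\sum_{\tau=0}^{t-1}X(\tau) = \frac{1}{t}\sum_{\tau=0}^{t-1}\avg{X}(\tau) + \frac{1}{t}\sum_{\tau=0}^{t-1}\expect{X(\tau)|\script{H}_X(\tau)} \leq \frac{1}{t}\sum_{\tau=0}^{t-1}\avg{X}(\tau) + B . \]
Taking $\limsup_{t\rightarrow\infty}$ of both sides and invoking the probability-1 limit just established gives $\limsup_{t\rightarrow\infty}\frac{1}{t}\sum_{\tau=0}^{t-1}X(\tau) \leq B$ with probability 1, which is the claim.

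I expect the only non-routine point to be the filtration bookkeeping in the first step: one must be careful that the centered process $\avg{X}(t)$ still satisfies the martingale-difference hypothesis relative to \emph{its own} history $\script{H}_{\avg{X}}(t)$, which is coarser than $\script{H}_X(t)$, and this is precisely what the tower-property argument handles. Everything else — the constant-$4$ comparison of second moments and the termwise bound on the conditional means — is mechanical, and the conclusion follows directly from Theorem \ref{lem:martingale}.
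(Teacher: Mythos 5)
Your proof is correct and follows essentially the same route as the paper's Appendix A: center $X(t)$ by its conditional mean, verify the martingale-difference hypothesis for the centered process with respect to its own history, bound the second moment by $4\expect{X(t)^2}$, invoke Theorem \ref{lem:martingale}, and reassemble. Your second-moment bound via $(a-b)^2 \leq 2a^2 + 2b^2$ plus conditional Jensen is a slightly cleaner path to the constant $4$ than the paper's full expansion followed by Jensen and Cauchy--Schwarz, and your tower-property argument for the filtration step (using only that $\script{H}_{\avg{X}}(t)$ is coarser than $\script{H}_X(t)$) is a bit more careful than the paper's claim that the two histories carry the same information, but neither change alters the structure of the argument.
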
 

\begin{proof} 
The idea is to 
define the process $\tilde{X}(t) \defequiv X(t) - \expect{X(t)|\script{H}_X(t)}$, and then apply the result of 
Theorem \ref{lem:martingale} to the process $\tilde{X}(t)$.  This is shown in Appendix A for completeness. 
\end{proof}

\subsection{A Generalized Drift-Plus-Penalty Theorem} 

Now let $\Psi(t)$ be a non-negative stochastic process defined over slots $t \in \{0, 1, 2, \ldots\}$, and let 
$\beta(t)$ be another stochastic process defined on the same probability space and whose time average we want
to show is non-negative.   The $\Psi(t)$ process can represent the values of a general 
Lyapunov function over time $t \in \{0, 1, 2, \ldots\}$.  Define $\delta(t) \defequiv \Psi(t+1) - \Psi(t)$
as the difference process.  Define the history $\script{H}(t)$ for this system by: 
\begin{equation} \label{eq:history-general-dpp} 
\script{H}(t) \defequiv \{\Psi(0), \ldots, \Psi(t), \beta(0), \ldots, \beta(t-1)\}
\end{equation} 

\begin{thm} \label{thm:dpp-general} (Generalized Drift-Plus-Penalty)  Suppose $\Psi(0)$ is finite with probability 1,
that $\expect{\delta(t)^2}$ and $\expect{\beta(t)^2}$ are 
finite for all $t$, and that: 
\[ \sum_{t=1}^{\infty} \frac{\expect{\delta(t)^2}+\expect{\beta(t)^2}}{t^2} < \infty    \]
Further suppose that the following drift-plus-penalty condition holds for all $t$ and all possible $\script{H}(t)$: 
\begin{equation} \label{eq:dpp-gen} 
\expect{\delta(t) + \beta(t)| \script{H}(t)} \leq 0 
\end{equation} 
Then:
 \[ \limsup_{t\rightarrow\infty} \frac{1}{t}\sum_{\tau=0}^{t-1} \beta(\tau) \leq 0 \: \: (w.p.1) \]
 \end{thm}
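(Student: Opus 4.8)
The plan is to reduce the statement to a single application of Corollary \ref{lem:supermartingale} by collapsing the drift part of the drift-plus-penalty sum into a telescoping sum. Define the combined process $X(t) \defequiv \delta(t) + \beta(t) = \Psi(t+1) - \Psi(t) + \beta(t)$. The first step is to verify that the hypothesis $\expect{\delta(t)+\beta(t)\,|\,\script{H}(t)} \leq 0$ implies the weaker conditional bound $\expect{X(t)\,|\,\script{H}_X(t)} \leq 0$ required by the corollary, where $\script{H}_X(t) = \{X(0),\ldots,X(t-1)\}$. This holds because each $X(\tau)$ with $\tau \leq t-1$ is a function of $\Psi(0),\ldots,\Psi(t)$ and $\beta(0),\ldots,\beta(t-1)$, all of which belong to $\script{H}(t)$; hence $\script{H}_X(t)$ is determined by $\script{H}(t)$, and the law of iterated expectations gives $\expect{X(t)\,|\,\script{H}_X(t)} = \expect{\expect{X(t)\,|\,\script{H}(t)}\,|\,\script{H}_X(t)} \leq 0$.

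Next I would check the second-moment summability hypothesis. Using $(a+b)^2 \leq 2a^2 + 2b^2$ one gets $\expect{X(t)^2} \leq 2\expect{\delta(t)^2} + 2\expect{\beta(t)^2}$, which is finite for each $t$, and $\sum_{t=1}^{\infty} \expect{X(t)^2}/t^2 \leq 2\sum_{t=1}^{\infty} (\expect{\delta(t)^2} + \expect{\beta(t)^2})/t^2 < \infty$ by assumption. Corollary \ref{lem:supermartingale} applied with the constant $B=0$ then yields $\limsup_{t\rightarrow\infty}\frac{1}{t}\sum_{\tau=0}^{t-1} X(\tau) \leq 0$ with probability 1.

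Finally I would exploit the telescoping structure of the drift sum: $\frac{1}{t}\sum_{\tau=0}^{t-1}X(\tau) = \frac{\Psi(t) - \Psi(0)}{t} + \frac{1}{t}\sum_{\tau=0}^{t-1}\beta(\tau)$, so that $\frac{1}{t}\sum_{\tau=0}^{t-1}\beta(\tau) = \frac{1}{t}\sum_{\tau=0}^{t-1}X(\tau) - \frac{\Psi(t)}{t} + \frac{\Psi(0)}{t} \leq \frac{1}{t}\sum_{\tau=0}^{t-1}X(\tau) + \frac{\Psi(0)}{t}$, where the inequality uses $\Psi(t)\geq 0$. Since $\Psi(0)$ is finite with probability 1, $\Psi(0)/t \rightarrow 0$ almost surely, and taking $\limsup$ of both sides as $t\rightarrow\infty$ gives the claimed bound. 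I do not expect a serious obstacle; the only step requiring care is the filtration argument in the first paragraph — confirming that conditioning on the richer history $\script{H}(t)$ is compatible with invoking the corollary, which is stated in terms of the natural history of $X$ — together with the elementary but essential observation that non-negativity of $\Psi$ is exactly what lets us discard the $-\Psi(t)/t$ term rather than having to control its growth.
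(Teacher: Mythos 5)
Your proof is correct and follows the same route as the paper: define $X(t) = \delta(t) + \beta(t)$, verify that $\script{H}_X(t)$ is measurable with respect to $\script{H}(t)$ so the tower property carries the conditional bound down, check second-moment summability via $(a+b)^2\le 2a^2+2b^2$, apply Corollary~\ref{lem:supermartingale} with $B=0$, then telescope and discard $-\Psi(t)/t$ by non-negativity of $\Psi$. No meaningful differences from the paper's own argument.
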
 
 
\begin{proof} 
Define $X(t) \defequiv \delta(t) + \beta(t)$.  The idea is to apply Corollary \ref{lem:supermartingale} to the process
$X(t)$.   To this end, we simply need to show that  $X(t)$ satisfies the assumptions needed in Corollary 
\ref{lem:supermartingale}. 
Note that the history $\script{H}(t)$ contains more information that the 
history $\script{H}_X(t)$, defined: 
\[ \script{H}_X(t) \defequiv \{X(0), X(1), \ldots, X(t-1)\} \]
Indeed,  $\script{H}_X(t)$ can be ascertained with knowledge of the more detailed history $\script{H}(t)$. Thus, 
we can write $\script{H}(t) = \script{H}(t) \cup \script{H}_X(t)$, as adding the information $\script{H}_X(t)$ does not
create any new information. Thus, using iterated expectations yields: 
\begin{eqnarray*}
\expect{\expect{X(t)|\script{H}(t)}|\script{H}_X(t)} &=& \expect{\expect{X(t)|\script{H}(t)\cup\script{H}_X(t)}|\script{H}_X(t)} \\
&=& \expect{X(t) | \script{H}_X(t)} 
\end{eqnarray*}
On the other hand, by (\ref{eq:dpp-gen}) we have: 
\begin{eqnarray*}
 \expect{\expect{X(t)|\script{H}(t)}|\script{H}_X(t)} &=& \expect{\expect{\delta(t) + \beta(t)|\script{H}(t)}|\script{H}_X(t)} \\
 &\leq& \expect{ 0 | \script{H}_X(t)} = 0 
 \end{eqnarray*}
Therefore, for all $t$ and all possible $\script{H}_X(t)$ we have: 
\[ \expect{X(t)|\script{H}_X(t)} \leq 0 \]
It remains only to show that: 
\[ \sum_{t=1}^{\infty}\frac{\expect{X(t)^2}}{t^2} < \infty \]
Because $(\delta(t) + \beta(t))^2 \leq 2\delta(t)^2 + 2\beta(t)^2$, we have:
\begin{eqnarray*}
\sum_{t=1}^{\infty}\frac{\expect{X(t)^2}}{t^2} &=& \sum_{t=1}^{\infty} \frac{\expect{(\delta(t) + \beta(t))^2}}{t^2} \\
&\leq& 2\sum_{t=1}^{\infty}\frac{\expect{\delta(t)^2 + \beta(t)^2}}{t^2} < \infty
\end{eqnarray*}
Thus, by  Corollary \ref{lem:supermartingale} we have: 
\begin{equation} \label{eq:use-this-dpp-gen} 
 \limsup_{t\rightarrow\infty} \frac{1}{t} \sum_{\tau=0}^{t-1}  X(\tau) \leq 0 \: \: (w.p.1) 
 \end{equation} 
However, recalling that $X(t) \defequiv \Psi(t+1) - \Psi(t) + \beta(t)$, we have: 
\begin{eqnarray*}
 \sum_{\tau=0}^{t-1} X(\tau) &=& \Psi(t) - \Psi(0)  +  \sum_{\tau=0}^{t-1} \beta(\tau) \\
 &\geq&  -\Psi(0) + \sum_{\tau=0}^{t-1} \beta(\tau) 
 \end{eqnarray*}
where the final inequality holds because $\Psi(t) \geq 0$. Dividing the above inequality by $t$ yields: 
\[\frac{1}{t}\sum_{\tau=0}^{t-1} X(\tau) \geq  \frac{-\Psi(0)}{t}  +  \frac{1}{t}\sum_{\tau=0}^{t-1} \beta(\tau) \]
Taking a $\limsup$ of the above as $t \rightarrow\infty$ and using (\ref{eq:use-this-dpp-gen}) proves the result. 
\end{proof}

\section{The Lyapunov Optimization Theorem --- Proving Theorem \ref{thm:lyap-opt}} \label{section:main} 

Consider the stochastic processes $\bv{Q}(t) = (Q_1(t), \ldots, Q_K(t))$ and $p(t)$ as described in Section 
\ref{section:background}.  Consider the quadratic Lyapunov function $L(\bv{Q}(t))$ defined in (\ref{eq:lyapunov-function}), repeated
again here for convenience: 
\[ L(\bv{Q}(t)) = \frac{1}{2}\sum_{k=1}^Kw_k Q_k(t)^2 \]
where $w_k>0$ for all $k$. 
Define $\norm{\bv{Q}(t)}$ by: 
\begin{eqnarray*}
& \norm{\bv{Q}(t)} \defequiv \sqrt{L(\bv{Q}(t))} = \sqrt{\frac{1}{2}\sum_{k=1}^Kw_kQ_k(t)^2} & 
\end{eqnarray*} 
It is not difficult to show that: 
\begin{eqnarray}
& \sum_{k=1}^K \frac{\sqrt{w_k}}{\sqrt{2}}|Q_k(t)| \geq \norm{\bv{Q}(t)} & \label{eq:triangle} 
 \end{eqnarray}
Further, for any vectors $\bv{a}, \bv{b}$ we have: 
\begin{equation} \label{eq:t2} 
\norm{\bv{a} + \bv{b}} \leq \norm{\bv{a}} + \norm{\bv{b}}
\end{equation} 

Define the drift $\Delta(\script{H}(t))$ according to (\ref{eq:lyapunov-drift}), where the history $\script{H}(t)$ is defined 
in (\ref{eq:history-Q}). 
 Define the Lyapunov difference process 
$\delta(t) \defequiv L(\bv{Q}(t+1)) - L(\bv{Q}(t))$, and note by definition that: 
\begin{equation} \label{eq:deltaD} 
 \expect{\delta(t)|\script{H}(t)} = \Delta(\script{H}(t)) 
 \end{equation} 
Define $d_k(t)$ as the queue $k$ difference process: 
\[ d_k(t) \defequiv Q_k(t+1) - Q_k(t) \]
We will bound the time averages of $p(t)$ and $Q_k(t)$ when the following drift-plus-penalty condition holds
for all $t$ and all $\script{H}(t)$: 
\begin{equation} \label{eq:vldpp}
 \Delta(\script{H}(t)) + V\expect{p(t)|\script{H}(t)} \leq B + Vp^* - \epsilon \sum_{k=1}^K |Q_k(t)| 
 \end{equation} 
for some finite constants $B$, $p^*$, $V$, $\epsilon$.  To this end, we define $\Psi(t) \defequiv L(\bv{Q}(t))$
and $\beta(t)$ as follows: 
\begin{equation} \label{eq:beta-lyap}
 \beta(t) \defequiv Vp(t) - B - Vp^* + \epsilon\sum_{k=1}^K|Q_k(t)|
 \end{equation} 
 The idea is to show that the assumptions needed in the generalized drift-plus-penalty theorem (Theorem \ref{thm:dpp-general}) 
 hold for these definitions
 of $\Psi(t)$ and $\beta(t)$.

\begin{thm} \label{thm:var-lyap}  
Suppose that  the boundedness assumptions (\ref{eq:boundedness1}) and (\ref{eq:boundedness2}) hold. 
Suppose that $\expect{Q_k(t)^2}$ is finite for all $k$ and all $t$, and that for all $k \in \{1, \ldots, K\}$ we have: 
\begin{equation} \label{eq:var-lyap}
 \sum_{t=1}^{\infty} \frac{\expect{Q_k(t)^2}}{t^2} < \infty  
 \end{equation} 
Define the quadratic Lyapunov function 
 $L(\bv{Q}(t))$ as in (\ref{eq:lyapunov-function}), and suppose there are constants $B$, $p^*$, $V\geq0$, $\epsilon\geq0$ for which the
drift-plus-penalty condition (\ref{eq:vldpp}) holds for all $t$ and all possible $\script{H}(t)$.  Then:

a) If $V>0$ we have: 
\begin{eqnarray}
\limsup_{t\rightarrow\infty} \frac{1}{t}\sum_{\tau=0}^{t-1} p(\tau) &\leq& p^* + \frac{B}{V} \: \: (w.p.1)  \label{eq:expect13} 
\end{eqnarray}

b) If $\epsilon>0$, we have: 
\begin{eqnarray} 
\limsup_{t\rightarrow\infty} \frac{1}{t}\sum_{\tau=0}^{t-1} \sum_{k=1}^K|Q_k(\tau)| &\leq& \frac{B}{\epsilon} + \frac{V}{\epsilon}\limsup_{t\rightarrow\infty}\frac{1}{t}\sum_{\tau=0}^{t-1}[p^* - p(\tau)]  \: \: (w.p.1) \label{eq:expect23} 
\end{eqnarray} 
\end{thm}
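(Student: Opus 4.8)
The plan is to reduce the whole statement to the generalized drift-plus-penalty theorem (Theorem \ref{thm:dpp-general}) applied to the non-negative process $\Psi(t)\defequiv L(\bv{Q}(t))$ and the process $\beta(t)$ of (\ref{eq:beta-lyap}). With $\delta(t)=\Psi(t+1)-\Psi(t)=L(\bv{Q}(t+1))-L(\bv{Q}(t))$, identity (\ref{eq:deltaD}) turns the drift-plus-penalty hypothesis (\ref{eq:vldpp}) into precisely $\expect{\delta(t)+\beta(t)\,|\,\script{H}(t)}\leq 0$, where $\script{H}(t)$ is the queueing history (\ref{eq:history-Q}); since this history is finer than the $(\Psi,\beta)$-history (\ref{eq:history-general-dpp}), the inequality also holds after conditioning on the coarser history (by iterated expectations, exactly as in the proof of Theorem \ref{thm:dpp-general}), so the drift hypothesis there is met. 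The hypothesis that $\Psi(0)$ is finite with probability 1 follows from finiteness of $\expect{Q_k(0)^2}$. Thus the only substantive thing to verify is the summable-variance condition $\sum_{t\geq 1}\big(\expect{\delta(t)^2}+\expect{\beta(t)^2}\big)/t^2<\infty$.

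Bounding $\expect{\beta(t)^2}$ is routine: expanding $\beta(t)^2$ and using $(\sum_{i=1}^n a_i)^2\leq n\sum_i a_i^2$ gives $\expect{\beta(t)^2}\leq c_1+c_2\expect{p(t)^2}+c_3\sum_k\expect{Q_k(t)^2}$ with constants depending only on $V,B,p^*,\epsilon,K,\{w_k\}$, so $\sum_t\expect{\beta(t)^2}/t^2<\infty$ follows from (\ref{eq:boundedness1}) and (\ref{eq:var-lyap}). The delicate part — and the main obstacle — is $\expect{\delta(t)^2}$, since we are given only a \emph{fourth}-moment bound on queue increments and a \emph{second}-moment bound on the queues. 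Here I would use the algebraic identity $\delta(t)=\sum_k w_k Q_k(t)d_k(t)+\tfrac12\sum_k w_k d_k(t)^2$ (with $d_k(t)=Q_k(t+1)-Q_k(t)$), so that $\delta(t)^2\leq c_4\sum_k d_k(t)^4+c_5\sum_k Q_k(t)^2 d_k(t)^2$. For the first term, iterated expectations and (\ref{eq:boundedness2}) give $\expect{d_k(t)^4}\leq D$; for the cross term, condition on $\bv{Q}(t)$ and apply Jensen's inequality to the conditional expectation, $\expect{d_k(t)^2\,|\,\bv{Q}(t)}\leq\sqrt{\expect{d_k(t)^4\,|\,\bv{Q}(t)}}\leq\sqrt{D}$, whence $\expect{Q_k(t)^2 d_k(t)^2}=\expect{Q_k(t)^2\,\expect{d_k(t)^2\,|\,\bv{Q}(t)}}\leq\sqrt{D}\,\expect{Q_k(t)^2}$. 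This yields $\expect{\delta(t)^2}\leq c_6+c_7\sum_k\expect{Q_k(t)^2}<\infty$, and dividing by $t^2$ and summing, (\ref{eq:var-lyap}) finishes the verification. (The conditioning-on-$\bv{Q}(t)$ trick together with conditional Jensen is exactly what is needed to keep the otherwise dangerous term $Q_k(t)^2 d_k(t)^2$ under control.)

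With both hypotheses of Theorem \ref{thm:dpp-general} in hand, we obtain $\limsup_{t\to\infty}\frac1t\sum_{\tau=0}^{t-1}\beta(\tau)\leq 0$ w.p.1, i.e. $\limsup_{t\to\infty}\big[\frac{V}{t}\sum_{\tau}p(\tau)-B-Vp^*+\frac{\epsilon}{t}\sum_{\tau}\sum_k|Q_k(\tau)|\big]\leq 0$ w.p.1. For part (a), when $V>0$, discard the non-negative term $\frac{\epsilon}{t}\sum_\tau\sum_k|Q_k(\tau)|$ and divide by $V$ to get (\ref{eq:expect13}). For part (b), when $\epsilon>0$, write $\frac{\epsilon}{t}\sum_\tau\sum_k|Q_k(\tau)|$ as the above bracketed quantity plus $B+Vp^*$ minus $\frac{V}{t}\sum_\tau p(\tau)$, apply subadditivity of $\limsup$ (taking care to use $\limsup(a+c)\leq\limsup a+\limsup c$, not the reverse), note $\limsup_t\big(-\frac{V}{t}\sum_\tau p(\tau)\big)=V\limsup_t\frac1t\sum_\tau[p^*-p(\tau)]-Vp^*$, and divide by $\epsilon$; the $p^*$ terms cancel and (\ref{eq:expect23}) drops out. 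The only care required in this last step is the direction of the $\limsup$ inequality.
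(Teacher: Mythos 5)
Your proof is correct and follows essentially the same approach as the paper's: the same reduction to Theorem~\ref{thm:dpp-general} via $\Psi(t)=L(\bv{Q}(t))$ and $\beta(t)$, the same split of Claim~1 into bounds on $\expect{\delta(t)^2}$ and $\expect{\beta(t)^2}$, the identical conditional-Jensen trick (conditioning on $\bv{Q}(t)$ and using $\expect{d_k(t)^2\,|\,\bv{Q}(t)}\le\sqrt{D}$) to tame the cross term $Q_k(t)^2 d_k(t)^2$, and the same $\limsup$-subadditivity step for part~(b). The only differences are cosmetic (using $(\sum a_i)^2\le n\sum a_i^2$ rather than Cauchy--Schwarz in spots, and being slightly more explicit than the paper about why conditioning on the finer queueing history suffices).
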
 

\begin{proof}
Define $\Psi(t) \defequiv L(\bv{Q}(t))$, 
$\delta(t) \defequiv L(\bv{Q}(t+1)) - L(\bv{Q}(t))$ and define $\beta(t)$ as in (\ref{eq:beta-lyap}).
For all $t$ and all $\script{H}(t)$ we have: 
\begin{eqnarray}
 \expect{\delta(t) + \beta(t)|\script{H}(t)}  &=& \Delta(\script{H}(t)) + \expect{\beta(t)|\script{H}(t)} \label{eq:vl1} \\
 &=& \Delta(\script{H}(t)) + V\expect{p(t)|\script{H}(t)} - B - Vp^* + \epsilon\sum_{k=1}^K|Q_k(t)|\label{eq:vl2} \\
 &\leq& 0 \label{eq:vl3} 
 \end{eqnarray}
 where (\ref{eq:vl1}) follows from (\ref{eq:deltaD}), (\ref{eq:vl2}) follows by definition of $\beta(t)$ and the
 fact that $\expect{|Q_k(t)||\script{H}(t)} = |Q_k(t)|$,  and (\ref{eq:vl3}) follows
 from (\ref{eq:vldpp}). 

\emph{Claim 1:} 
\[ \sum_{t=1}^{\infty}\frac{\expect{\delta(t)^2 + \beta(t)^2}}{t^2} < \infty \]
This claim is proven in Appendix B.  Assuming the result of the claim, we know that all conditions for the $\Psi(t)$ and
$\beta(t)$ processes needed to apply Theorem \ref{thm:dpp-general} hold.  We thus conclude: 
\begin{equation*} 
\limsup_{t\rightarrow\infty} \frac{1}{t} \sum_{\tau=0}^{t-1} \beta(\tau) \leq 0 \: \: (w.p.1) 
\end{equation*} 
That is: 
\begin{equation} \label{eq:lots} 
\limsup_{t\rightarrow\infty} \frac{1}{t} \sum_{\tau=0}^{t-1}  \left[ Vp(\tau) - B - Vp^* + \epsilon\sum_{k=1}^K|Q_k(\tau)| \right] \leq 0  \: \: (w.p.1) 
\end{equation} 
First assume that $V>0$. 
Neglecting the non-negative term $\epsilon \sum_{k=1}^K|Q_k(\tau)|$ from (\ref{eq:lots}) and dividing by $V$
yields: 
\[  \limsup_{t\rightarrow\infty} \frac{1}{t}  \sum_{\tau=0}^{t-1}[ p(\tau) - B/V - p^*]  \leq 0 \: \: (w.p.1)   \]
This proves (\ref{eq:expect13}). 

Now note that for any functions $f(t)$, $g(t)$, we have:\footnote{This follows by: $\limsup_{t\rightarrow\infty} f(t) = \limsup_{t\rightarrow\infty} [g(t) + (f(t) - g(t))] \leq \limsup_{t\rightarrow\infty} g(t) + \limsup_{t\rightarrow\infty} (f(t) - g(t))$.} 
\[ \limsup_{t\rightarrow\infty} [f(t) - g(t)] \leq 0  \implies \limsup_{t\rightarrow\infty} f(t) \leq \limsup_{t\rightarrow\infty} g(t) \]
Defining $f(t) \defequiv \frac{\epsilon}{t}\sum_{\tau=0}^{t-1}\sum_{k=1}^K|Q_k(\tau)|$ and $g(t) \defequiv \frac{1}{t}\sum_{\tau=0}^{t-1}[B + V(p^* - p(\tau))]$, it follows from (\ref{eq:lots}) that:
\[ \limsup_{t\rightarrow\infty} \frac{\epsilon}{t} \sum_{\tau=0}^{t-1} \sum_{k=1}^K|Q_k(\tau)|\leq 
\limsup_{t\rightarrow\infty} \frac{1}{t}\sum_{\tau=0}^{t-1}[B + V(p^* - p(\tau))]   \: \: (w.p.1) \]
If $\epsilon>0$, we can divide the above by $\epsilon$ to prove (\ref{eq:expect23}).
\end{proof}

\begin{thm} \label{thm:quad-lyap} 
Suppose we have a quadratic Lyapunov function $L(\bv{Q}(t))$ as defined in (\ref{eq:lyapunov-function}), and 
that assumption (\ref{eq:boundedness2}) holds, so that $\expect{d_k(t)^4|\bv{Q}(t)} \leq D$ for all $t$ and for
some finite constant $D$, 
 where $d_k(t) = Q_k(t+1) - Q_k(t)$.  Suppose that $\expect{\norm{\bv{Q}(0)}^4} < \infty$.
 Suppose that there is an $\epsilon>0$ and a constant $\tilde{B}>0$ such that: 
 \begin{equation} \label{eq:quad-lyap-drift} 
  \Delta(\script{H}(t)) \leq \tilde{B} - \epsilon\sum_{k=1}^K|Q_k(t)| 
  \end{equation} 
 Then:

a) There are constants $c>0$ and $a>0$ such that whenever $\norm{\bv{Q}(t)} \geq a$, we have: 
\[ \expect{\norm{\bv{Q}(t+1)}|\bv{Q}(t)} \leq \norm{\bv{Q}(t)} - c  \]

b) There is a finite constant $b>0$ 
such that for all $M \in \{1, 2, \ldots\}$ we have: 
 \[ \frac{1}{M} \sum_{t=0}^{M-1} \expect{\norm{\bv{Q}(t)}^3} \leq b \]

c) For all $k \in \{1, \ldots, K\}$ we have: 
\begin{equation*} 
 \sum_{t=1}^{\infty} \frac{\expect{Q_k(t)^2}}{t^2} < \infty 
 \end{equation*} 

d) For all $k \in \{1, \ldots, K\}$ we have: 
\[ \lim_{t\rightarrow\infty} \frac{Q_k(t)}{t}  = 0 \: \: (w.p.1) \]

e) We have: 
\[ \limsup_{t\rightarrow\infty} \frac{1}{t}\sum_{\tau=0}^{t-1} \sum_{k=1}^K|Q_k(\tau)| \leq \frac{\tilde{B}}{\epsilon} \: \: (w.p.1) \]
\end{thm}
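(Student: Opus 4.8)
The plan is to prove the five parts in the order stated; parts (a) and (b) carry the weight, and (c)--(e) then follow quickly from them and from results already established. Throughout set $\theta \defequiv \sqrt{2/\max_k w_k} > 0$, so that (\ref{eq:triangle}) gives $\sum_{k=1}^K |Q_k(t)| \geq \theta\norm{\bv{Q}(t)}$, and write $\delta(t) \defequiv L(\bv{Q}(t+1)) - L(\bv{Q}(t))$; since $\sum_k |Q_k(t)|$ and $L(\bv{Q}(t))$ are functions of $\bv{Q}(t)$, iterated expectations on the hypothesis (\ref{eq:quad-lyap-drift}) give the $\bv{Q}(t)$-conditioned bound $\expect{\delta(t)|\bv{Q}(t)} \leq \tilde{B} - \epsilon\theta\norm{\bv{Q}(t)}$. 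For part (a) I would use concavity of the square root: $\sqrt{x} \leq \sqrt{x_0} + (x-x_0)/(2\sqrt{x_0})$ for all $x \geq 0$, $x_0 > 0$. Taking $x = L(\bv{Q}(t+1))$, $x_0 = L(\bv{Q}(t)) = \norm{\bv{Q}(t)}^2$ (legitimate since part (a) only concerns $\norm{\bv{Q}(t)} \geq a > 0$) and then $\expect{\,\cdot\,|\bv{Q}(t)}$ yields $\expect{\norm{\bv{Q}(t+1)}|\bv{Q}(t)} \leq \norm{\bv{Q}(t)} + \tilde{B}/(2\norm{\bv{Q}(t)}) - \epsilon\theta/2$; choosing $a \defequiv 2\tilde{B}/(\epsilon\theta)$ makes the middle term at most $\epsilon\theta/4$ whenever $\norm{\bv{Q}(t)} \geq a$, which proves (a) with $c \defequiv \epsilon\theta/4$. (Finiteness of the conditional expectation uses $\norm{\bv{Q}(t+1)} \leq \norm{\bv{Q}(t)} + \norm{\bv{d}(t)}$ from (\ref{eq:t2}) together with $\expect{d_k(t)^2|\bv{Q}(t)} \leq \sqrt{D}$, which follows from (\ref{eq:boundedness2}) and Jensen.)

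Part (b) is the crux and I would obtain it from the drift of $\norm{\bv{Q}(t)}^4 = L(\bv{Q}(t))^2$. The elementary identity $\norm{\bv{Q}(t+1)}^4 - \norm{\bv{Q}(t)}^4 = 2L(\bv{Q}(t))\delta(t) + \delta(t)^2$, combined with the conditional drift bound above and with a bound $\expect{\delta(t)^2|\bv{Q}(t)} \leq C_1\norm{\bv{Q}(t)}^2 + C_2$ --- got by writing $\delta(t) = \sum_k w_k Q_k(t) d_k(t) + \tfrac12\sum_k w_k d_k(t)^2$ and applying Cauchy--Schwarz, the fourth-moment bound (\ref{eq:boundedness2}), and $\sum_k Q_k(t)^2 \leq (2/\min_k w_k)\norm{\bv{Q}(t)}^2$ --- gives
\[ \expect{\norm{\bv{Q}(t+1)}^4 - \norm{\bv{Q}(t)}^4 \,|\, \bv{Q}(t)} \leq -2\epsilon\theta\norm{\bv{Q}(t)}^3 + (2\tilde{B} + C_1)\norm{\bv{Q}(t)}^2 + C_2 . \]
Since $(2\tilde{B}+C_1) x^2 \leq \epsilon\theta\, x^3 + C_3$ for all $x \geq 0$ with $C_3 \defequiv \tfrac{4}{27}(2\tilde{B}+C_1)^3/(\epsilon\theta)^2$, the right side is at most $-\epsilon\theta\norm{\bv{Q}(t)}^3 + C_2 + C_3$. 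Taking expectations, telescoping over $t = 0, \ldots, M-1$, and using $\expect{\norm{\bv{Q}(M)}^4} \geq 0$ and $\expect{\norm{\bv{Q}(0)}^4} < \infty$ yields $\frac1M\sum_{t=0}^{M-1}\expect{\norm{\bv{Q}(t)}^3} \leq b$ with $b \defequiv (\expect{\norm{\bv{Q}(0)}^4} + C_2 + C_3)/(\epsilon\theta)$. A brief induction on $t$ (using the same inequality with the negative term discarded) confirms $\expect{\norm{\bv{Q}(t)}^4} < \infty$ for every finite $t$, legitimizing the telescoping.

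Parts (c)--(e) are then short. Discarding the negative term in the telescoped inequality gives $\expect{\norm{\bv{Q}(M)}^4} \leq \expect{\norm{\bv{Q}(0)}^4} + M(C_2 + C_3)$, so $\expect{\norm{\bv{Q}(t)}^2} \leq \sqrt{\expect{\norm{\bv{Q}(t)}^4}} = O(\sqrt{t})$ by Jensen, hence $\expect{Q_k(t)^2} \leq (2/\min_k w_k)\expect{\norm{\bv{Q}(t)}^2} = O(\sqrt{t})$, and $\sum_{t \geq 1}\expect{Q_k(t)^2}/t^2 = \sum_t O(t^{-3/2}) < \infty$, which is (c); the same bound also shows $\expect{Q_k(t)^2} < \infty$ for each $t$. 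Part (d) is immediate from (c) and Corollary \ref{cor:rate-stable}. For (e), apply Theorem \ref{thm:var-lyap}(b) with $V = 0$, $p^* = 0$, $p(t) \equiv 0$: condition (\ref{eq:boundedness1}) is trivial for $p \equiv 0$, (\ref{eq:boundedness2}) is assumed, the variance hypothesis (\ref{eq:var-lyap}) is exactly (c), and the drift-plus-penalty condition (\ref{eq:vldpp}) with these choices is exactly (\ref{eq:quad-lyap-drift}) with $B = \tilde{B}$; the conclusion (\ref{eq:expect23}) then reads $\limsup_{t\rightarrow\infty} \frac1t\sum_{\tau=0}^{t-1}\sum_k |Q_k(\tau)| \leq \tilde{B}/\epsilon$.

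The main obstacle is the conditional second-moment estimate on $\delta(t)$ used in (b): one must see that the cross term $\sum_k w_k Q_k(t) d_k(t)$ has conditional second moment only $O(\norm{\bv{Q}(t)}^2)$ --- linear, not quadratic, in $L(\bv{Q}(t))$ --- since it is precisely this that lets the $-2\epsilon\theta\norm{\bv{Q}(t)}^3$ term absorb the $\norm{\bv{Q}(t)}^2$ error and produce genuine negative drift outside a bounded region. The rest is routine bookkeeping with the weights $w_k$ and the standard ``drift at most a constant minus a power of the state'' telescoping argument.
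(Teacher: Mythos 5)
Your proof is correct, and it diverges from the paper's argument in three places that are each worth noting. For part (a), you apply concavity of $\sqrt{\cdot}$ directly to $L(\bv{Q}(t+1))$ around $L(\bv{Q}(t))$, giving $\expect{\norm{\bv{Q}(t+1)}|\bv{Q}(t)} \leq \norm{\bv{Q}(t)} + \expect{\delta(t)|\bv{Q}(t)}/(2\norm{\bv{Q}(t)})$ in one step; the paper instead squares, completes a square to get $\expect{\norm{\bv{Q}(t+1)}^2|\bv{Q}(t)} \leq (\norm{\bv{Q}(t)}-c)^2$, and then invokes Jensen to extract the square root, which forces the additional requirement $\norm{\bv{Q}(t)}\geq c$ into the threshold $a$. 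Your route yields the same $c = \epsilon\theta/4$ with a smaller threshold $a$ and no case splitting. For part (b), the paper expands $\bigl(\norm{\bv{Q}(t)}+\gamma(t)\bigr)^4$ with $\gamma(t) = \norm{\bv{Q}(t+1)}-\norm{\bv{Q}(t)}$ and needs part (a) to produce the negative linear-in-$\gamma$ term; you instead use the exact identity $L(\bv{Q}(t+1))^2 - L(\bv{Q}(t))^2 = 2L(\bv{Q}(t))\delta(t) + \delta(t)^2$, feed the raw drift hypothesis into the first term and a Cauchy--Schwarz/fourth-moment estimate $\expect{\delta(t)^2|\bv{Q}(t)} \leq C_1\norm{\bv{Q}(t)}^2+C_2$ into the second, and absorb the $\norm{\bv{Q}(t)}^2$ error into the cubic with a scalar calculus bound. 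This makes your part (b) logically independent of part (a), which the paper's is not. Finally, for part (c) the paper only has the Cesàro bound $\sum_{t<M}\expect{Q_k(t)^2}\leq CM$ and so must invoke its summation Lemma \ref{lem:appd}; you instead read off the pointwise bound $\expect{\norm{\bv{Q}(t)}^4} \leq \expect{\norm{\bv{Q}(0)}^4}+O(t)$ from the telescoping (dropping the negative term), apply Jensen to get $\expect{Q_k(t)^2}=O(\sqrt{t})$, and the series $\sum t^{-3/2}$ converges directly, making Lemma \ref{lem:appd} unnecessary. Parts (d) and (e) follow exactly as in the paper. Net effect: same conclusions, somewhat simpler dependency structure and a cleaner route to (c), at the cost of needing the scalar inequality $Ax^2 \leq \epsilon\theta x^3 + \tfrac{4}{27}A^3/(\epsilon\theta)^2$ in (b) where the paper just absorbs the lower-order polynomial terms by choosing a large threshold.
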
 

\begin{proof} 
The proof of parts (a) and (b) closely follow a similar result derived for exponential Lyapunov functions with 
deterministically bounded queue changes in \cite{longbo-lagrange}, and are provided in Appendix C.  
To prove parts (c), (d), (e), we have from part (b) 
that for all $M \in \{1, 2, 3, \ldots\}$: 
 \begin{equation} \label{eq:reuse0} 
  \sum_{t=0}^{M-1} \expect{\norm{\bv{Q}(t)}^3} \leq bM 
  \end{equation} 
  However, we have $\norm{\bv{Q}(t)}^3 \geq \norm{\bv{Q}(t)}^2 - 1$.  Using this with (\ref{eq:reuse0}) gives: 
\[ \sum_{t=0}^{M-1} (\expect{\norm{\bv{Q}(t)}^2} - 1) \leq bM \] 
  and so: 
  \[ \sum_{t=0}^{M-1} \expect{\norm{\bv{Q}(t)}^2} \leq (b+1)M \]
  Using $\frac{w_k}{2}Q_k(t)^2 \leq \norm{\bv{Q}(t)}^2$ in the above inequality proves that there is a finite
  constant $C>0$ such that for all $k \in \{1, \ldots, K\}$ we have: 
  \begin{equation*} 
\sum_{t=0}^{M-1}  \expect{Q_k(t)^2}  \leq CM \: \: \forall M \in \{1, 2, 3, \ldots\} 
\end{equation*} 
Lemma \ref{lem:appd} in Appendix D shows that the above inequality implies the result of part (c). 
  
Part (d) follows immediately from the result of part (c) together with  Corollary \ref{cor:rate-stable}. 
 To prove part (e), we note that the result of part (c) implies that the conditions for Theorem \ref{thm:var-lyap} are met
 for the case $\epsilon>0$, $p(t) = p^* = V = 0$, $B = \tilde{B}$, which yields the result. 
\end{proof}

\subsection{Completing the proof of Theorem \ref{thm:lyap-opt}} 

Suppose now the assumptions of Theorem \ref{thm:lyap-opt} hold, so that the drift-plus-penalty condition
(\ref{eq:dpp-condition}) is satisfied for all $t$ and all $\script{H}(t)$, and the boundedness assumptions (\ref{eq:boundedness1})-(\ref{eq:boundedness2}) hold.  We temporarily also assume that the initial state $\bv{Q}(0)$ is deterministically given as some
constant vector (so that $\expect{\norm{\bv{Q}(0)}^4} = \norm{\bv{Q}(0)}^4 < \infty$).  The condition (\ref{eq:dpp-condition}) 
together with the fact that $\expect{p(t)|\script{H}(t)} \geq p_{min}$  implies: 
\[ \Delta(\script{H}(t)) \leq B + V(p^* - p_{min}) - \epsilon \sum_{k=1}^K|Q_k(t)| \]
Defining $\tilde{B} = B + V(p^* - p_{min})$, by 
Theorem \ref{thm:quad-lyap} we know all queues are rate stable, that is, $\lim_{t\rightarrow\infty} Q_k(t)/t = 0$ with probability 1.  
We also know by Theorem \ref{thm:quad-lyap} that: 
\[ \sum_{t=1}^{\infty} \frac{\expect{Q_k(t)^2}}{t^2} < \infty \]
Then all assumptions are satisfied to apply Theorem \ref{thm:var-lyap}, and so we have that: 
\begin{eqnarray}
\limsup_{t\rightarrow\infty} \frac{1}{t}\sum_{\tau=0}^{t-1} p(\tau) &\leq& p^* + \frac{B}{V} \: \: (w.p.1) \nonumber \\
\limsup_{t\rightarrow\infty} \frac{1}{t}\sum_{\tau=0}^{t-1} \sum_{k=1}^K|Q_k(\tau)| &\leq& \frac{B}{\epsilon} + \frac{V}{\epsilon}\limsup_{t\rightarrow\infty}\frac{1}{t}\sum_{\tau=0}^{t-1}[p^* - p(\tau)]  \: \: (w.p.1)  \label{eq:togethernow} 
\end{eqnarray}
Because $\expect{-p(t) | \script{H}(t)} \leq  -p_{min}$ for all $t$ and all $\script{H}(t)$, and $\sum_{t=1}^{\infty} \expect{p(t)^2}/t^2 < \infty$, 
we know by  Corollary \ref{lem:supermartingale} that: 
\[ \limsup_{t\rightarrow\infty} \frac{1}{t}\sum_{\tau=0}^{t-1} [p^*-p(\tau)] \leq p^* - p_{min} \]
This together with  (\ref{eq:togethernow}) proves (\ref{eq:expect22}). 
 Thus, all desired performance bounds hold with probability 1 
under the assumption that the initial queue vector is some finite value $\bv{Q}(0)$. Because these bounds do not
depend on $\bv{Q}(0)$, it follows that these same bounds hold (with probability 1) 
if $\bv{Q}(0)$ is chosen randomly, provided that $\bv{Q}(0)$ is  finite
with probability 1. 

\subsection{Variations on Theorem \ref{thm:lyap-opt}} 

Suppose there are processes $B(t)$, $p(t)$, $p^*(t)$, $\bv{Q}(t)$ and constants $V\geq0$, $\epsilon>0$ such that for all 
$t$ and all possible $\script{H}(t)$, we have: 
\begin{equation} \label{eq:extension} 
\Delta(\script{H}(t)) + V\expect{p(t)|\script{H}(t)} \leq \expect{B(t)|\script{H}(t)} + V\expect{p^*(t)|\script{H}(t)} - \epsilon\sum_{k=1}^K|Q_k(t)|
\end{equation} 
This is a variation on the drift-plus-penalty condition (\ref{eq:dpp-condition}) that uses a time-varying $p^*(t)$ and $B(t)$. 
Suppose that $\bv{Q}(0)$ is finite with probability 1, and that: 
\begin{itemize} 
\item Second moments of $p(t)$, $B(t)$, and $p^*(t)$ are finite for all $t$, and: 
\[ \sum_{t=1}^{\infty} \frac{\expect{[V(p(t) - p^*(t)) - B(t)]^2}}{t^2} < \infty \]
\item There is a constant $\beta_{min}$ such that for all $t$ and all $\script{H}(t)$: 
\[ \expect{V(p(t) - p^*(t)) - B(t)|\script{H}(t)} \geq \beta_{min} \]
\item There is a constant $D>0$ such that for all $k \in \{1, \ldots, K\}$, all $t$,  and all possible $\bv{Q}(t)$: 
\[ \expect{(Q_k(t+1) - Q_k(t))^4|\bv{Q}(t)} \leq D \]
\end{itemize}
Then we can define $\tilde{B} \defequiv 0$, $\tilde{V}=1$, $\beta(t) \defequiv V(p(t) - p^*(t)) - B(t)$, $\beta^*=0$ to find: 
\[ \Delta(\script{H}(t)) +\expect{\beta(t)|\script{H}(t)} \leq 0 - \epsilon\sum_{k=1}^K|Q_k(t)| \]
Then the conditions of Theorem \ref{thm:lyap-opt} hold for $\beta(t)$ and $\beta^*$, and so we conclude (using (\ref{eq:togethernow})): 
\begin{eqnarray*}
\limsup_{t\rightarrow\infty} \frac{1}{t}\sum_{\tau=0}^{t-1} \beta(\tau) &\leq& 0 \: \: (w.p.1)  \\
\limsup_{t\rightarrow\infty} \frac{1}{t}\sum_{\tau=0}^{t-1}\sum_{k=1}^K|Q_k(\tau)| &\leq& \frac{1}{\epsilon}\limsup_{t\rightarrow\infty}\frac{1}{t}\sum_{\tau=0}^{t-1} [-\beta(\tau)] \: \: (w.p.1) 
\end{eqnarray*}
Thus: 
\begin{eqnarray*}
\limsup_{t\rightarrow\infty} \frac{1}{t}\sum_{\tau=0}^{t-1} [p(\tau) - p^*(\tau)] &\leq& \frac{1}{V}\limsup_{t\rightarrow\infty} \frac{1}{t}\sum_{\tau=0}^{t-1}B(\tau) \: \: (w.p.1)  \\
\limsup_{t\rightarrow\infty} \frac{1}{t}\sum_{\tau=0}^{t-1}\sum_{k=1}^K|Q_k(\tau)| &\leq& \frac{1}{\epsilon}\limsup_{t\rightarrow\infty}\frac{1}{t}\sum_{\tau=0}^{t-1} [B(\tau) + V(p^*(\tau) - p(\tau))] \: \: (w.p.1) 
\end{eqnarray*}

\section{Applications} \label{section:application} 

Here we illustrate an important application of Theorem \ref{thm:lyap-opt} to optimization of time averages in 
stochastic queueing networks.  This is the same scenario treated in \cite{now}.  However, while the work in \cite{now}
obtains bounds on the time average expectations via Theorem \ref{thm:lyap-opt-expectation}, here we obtain bounds
on the pure time averages via Theorem \ref{thm:lyap-opt}. 

Consider a $K$ queue network with queue vector $\bv{Q}(t) = (Q_1(t), \ldots, Q_K(t))$ that evolves in slotted
time $t \in \{0, 1, 2, \ldots\}$ with update equation: 
\begin{equation} \label{eq:q-update} 
 Q_k(t+1) = \max[Q_k(t) - b_k(t) + a_k(t), 0] \: \: \forall k \in \{1, \ldots, K\} 
 \end{equation} 
where $a_k(t)$ and $b_k(t)$ are arrival and service variables, respectively, for queue $k$.  These are determined
on slot $t$ by general functions $\hat{a}_k(\alpha(t), \omega(t))$, $\hat{b}_k(\alpha(t), \omega(t))$ of a 
\emph{network state} $\omega(t)$ and a \emph{control action} $\alpha(t)$: 
\[ a_k(t) = \hat{a}_k(\alpha(t), \omega(t)) \: \: , \: \: b_k(t) = \hat{b}_k(\alpha(t), \omega(t)) \]
where the control action $\alpha(t)$ is make every slot $t$ with knowledge of the current $\omega(t)$ and is 
chosen within some abstract set $\script{A}_{\omega(t)}$.  The $\omega(t)$ value can represent random arrival and
channel state information on slot $t$, and $\alpha(t)$ can represent a resource allocation decision.  For simplicity, assume
the $\omega(t)$ process is i.i.d. over slots. 

The control action additionally incurs a vector of \emph{penalties} $\bv{y}(t) = (y_0(t), y_1(t), \ldots, y_M(t))$, again given by
general functions of $\alpha(t)$ and $\omega(t)$: 
\[ y_m(t) = \hat{y}_m(\alpha(t), \omega(t)) \]
For $t>0$, define $\overline{a}_k(t)$, $\overline{b}_k(t)$, $\overline{y}_m(t)$, $\overline{Q}_k(t)$ 
as time averages over the first $t$ slots: 
\begin{eqnarray*}
\overline{a}_k(t) \defequiv \frac{1}{t}\sum_{\tau=0}^{t-1} a_k(\tau) , & 
\overline{b}_k(t) \defequiv \frac{1}{t}\sum_{\tau=0}^{t-1} b_k(\tau) \\
\overline{y}_m(t) \defequiv \frac{1}{t}\sum_{\tau=0}^{t-1} y_m(\tau) , &
\overline{Q}_k(t) \defequiv \frac{1}{t}\sum_{\tau=0}^{t-1} Q_k(\tau) 
\end{eqnarray*}
The goal is to choose control actions $\alpha(t) \in \script{A}_{\omega(t)}$ over time to solve the following stochastic
network optimization problem: 
\begin{eqnarray}
\mbox{Minimize:} && \limsup_{t\rightarrow\infty}  \overline{y}_0(t)  \label{eq:opt1} \\
\mbox{Subject to:} &1)& \limsup_{t\rightarrow\infty} \overline{Q}_k(t) < \infty \: \: \forall k \in \{1, \ldots, K\} \label{eq:opt2} \\
&2)& \limsup_{t\rightarrow\infty} \overline{y}_m(t) \leq 0 \: \: \forall m \in \{1, \ldots, M\} \label{eq:opt3} \\
&3)& \alpha(t) \in \script{A}_{\omega(t)} \: \: \forall t \in \{0, 1, 2, \ldots\} \label{eq:opt4} 
\end{eqnarray}
Typical penalties can represent \emph{power expenditures}.  For example, suppose $y_m(t) \defequiv p_m(t) - p_m^{av}$, 
where $p_m(t)$ is the power incurred in component $m$ of the network on slot $t$, and $p_m^{av}$ is a required
time average 
power expenditure.   Then ensuring $\limsup_{t\rightarrow\infty} \overline{y}_m(t) \leq 0$ ensures that 
$\limsup_{t\rightarrow\infty} \overline{p}_m(t) \leq p_m^{av}$, so that the desired time average power constraint is 
met  \cite{neely-energy-it}. 

To ensure the time average penalty constraints are met, for each $m \in \{1, \ldots, M\}$ we define a \emph{virtual queue} 
$Z_m(t)$ as follows: 
\begin{equation} \label{eq:z-update} 
Z_m(t+1) = \max[Z_m(t) + y_m(t), 0] 
\end{equation} 
It is easy to see that for any $t>0$ we have: 
\[ Z_m(t) -Z_m(0) \geq \sum_{\tau=0}^{t-1} y_m(\tau) \]
and therefore, dividing by $t$ and rearranging terms yields: 
\[ \frac{1}{t}\sum_{\tau=0}^{t-1} y_m(\tau) \leq  \frac{Z_m(t)}{t} - \frac{Z_m(0)}{t} \]
It follows that if $Z_m(t)$ is rate stable for all $m$, so that $Z_m(t)/t \rightarrow 0$ with probability 1, then 
the constraint (\ref{eq:opt3}) is satisfied with probability 1. 

Now define $\bv{\Theta}(t) \defequiv [\bv{Q}(t), \bv{Z}(t)]$ as the combined queue vector, and 
define the Lyapunov function: 
\[ L(\bv{\Theta}(t)) \defequiv \frac{1}{2}\left[\sum_{k=1}^KQ_k(t)^2 + \sum_{m=1}^MZ_m(t)^2\right] \]
The system history $\script{H}(t)$ is defined: 
\[ \script{H}(t) \defequiv \{\bv{\Theta}(0), \bv{\Theta}(1), \ldots, \bv{\Theta}(t), y_0(0), y_0(1), \ldots, y_0(t-1)\} \]
The drift-plus-penalty algorithm thus seeks to minimize a bound on: 
\[ \Delta(\script{H}(t)) + V\expect{\hat{y}_0(\alpha(t), \omega(t))|\script{H}(t)} \]

\subsection{Computing the Drift-Plus-Penalty Inequality} 

Assume the functions $\hat{a}_k(\cdot)$, $\hat{b}_k(\cdot)$, $\hat{y}_0(\cdot)$ satisfy the following 
for all possible $\omega(t)$ and all possible $\alpha(t) \in \script{A}_{\omega(t)}$: 
\begin{eqnarray*}
 0 \leq \hat{a}_k(\alpha(t), \omega(t)) \: \: , \: \: 0 \leq \hat{b}_k(\alpha(t), \omega(t)) \: \: , \: \: \hat{y}_0(\alpha(t), \omega(t)) \geq y_0^{min} 
 \end{eqnarray*}
 where $y_0^{min}$ is a deterministic lower bound on $y_0(t)$ for all $t$. 
Also assume that there is a finite constant $D>0$ such that 
for all (possibly randomized) choices of $\alpha(t)$ in reaction to the i.i.d. $\omega(t)$ we have: 
\begin{eqnarray}
\expect{\hat{a}_k(\alpha(t), \omega(t))^4} &\leq& D \: \: \forall k \in \{1, \ldots, K\} \label{eq:mb1} \\
\expect{\hat{b}_k(\alpha(t), \omega(t))^4} &\leq& D \: \: \forall k \in \{1, \ldots, K\} \label{eq:mb2} \\
\expect{\hat{y}_m(\alpha(t), \omega(t))^4} &\leq& D \: \: \forall m \in \{1, \ldots, M\} \label{eq:mb3} \\
\expect{\hat{y}_0(\alpha(t), \omega(t))^2} &\leq& D \label{eq:mb4} 
\end{eqnarray}
where the expectations are taken with respect to the distribution of the i.i.d. $\omega(t)$ process, and the possibly
randomized decisions $\alpha(t) \in \script{A}_{\omega(t)}$. 

By squaring (\ref{eq:q-update}) and (\ref{eq:z-update}) it is not difficult to show that the drift-plus-penalty expression satisfies 
the following bound (see \cite{now}): 
\begin{eqnarray}
\Delta(\script{H}(t)) + V\expect{\hat{y}_0(\alpha(t), \omega(t))|\script{H}(t)} &\leq& B + V\expect{\hat{y}_0(\alpha(t), \omega(t))|\script{H}(t)}  \nonumber \\
&&+ \sum_{k=1}^KQ_k(t)\expect{\hat{a}_k(\alpha(t), \omega(t)) - \hat{b}_k(\alpha(t), \omega(t))|\script{H}(t)}  \nonumber \\
&& + \sum_{m=1}^MZ_m(t)\expect{\hat{y}_m(\alpha(t), \omega(t))|\script{H}(t)}  \label{eq:dpp-opt1} 
\end{eqnarray}
for some finite constant $B>0$, representing a sum on the second moment bounds of the $a_k(t)$, $b_k(t)$, and $y_m(t)$ processes. 

\subsection{The Dynamic Drift-Plus-Penalty Algorithm} 

It is easy to show that the right-hand-side of the  inequality (\ref{eq:dpp-opt1}) 
is minimized by the policy that, every slot $t$, 
observes only the current 
queue values $\bv{Q}(t)$, $\bv{Z}(t)$ and the current $\omega(t)$ and chooses $\alpha(t) \in \script{A}_{\omega(t)}$ to minimize
the following expression: 
\begin{eqnarray*}
V\hat{y}_0(\alpha(t), \omega(t)) + \sum_{k=1}^KQ_k(t)[\hat{a}_k(\alpha(t), \omega(t)) - \hat{b}_k(\alpha(t), \omega(t))] 
 + \sum_{m=1}^MZ_m(t)\hat{y}_m(\alpha(t), \omega(t))
\end{eqnarray*}
Then update the actual queues $Q_k(t)$ 
according to (\ref{eq:q-update}) and the virtual queues $Z_m(t)$ according
to (\ref{eq:z-update}). 
This policy does not require knowledge of the probability distribution for $\omega(t)$.  One difficulty is that
it may not be possible to achieve the infimum of the above expression over the set $\script{A}_{\omega(t)}$, because
we are using general (possibly non-continuous) functions $\hat{a}_k(\alpha(t), \omega(t))$, $\hat{b}_k(\alpha(t), \omega(t))$, 
$\hat{y}_m(\alpha(t), \omega(t))$ and a general (possibly non-compact) set $\script{A}_{\omega(t)}$.   Thus, we simply
assume there is a finite constant $C\geq0$ such that our algorithm chooses $\alpha(t) \in \script{A}_{\omega(t)}$ to come within an additive constant $C$ of the infimum on every slot $t$, so that: 
\begin{eqnarray*}
V\hat{y}_0(\alpha(t), \omega(t)) + \sum_{k=1}^KQ_k(t)[\hat{a}_k(\alpha(t), \omega(t)) - \hat{b}_k(\alpha(t), \omega(t))] 
 + \sum_{m=1}^MZ_m(t)\hat{y}_m(\alpha(t), \omega(t)) \\
 \leq C + \inf_{\alpha\in\script{A}_{\omega(t)}} \left[V\hat{y}_0(\alpha, \omega(t)) + \sum_{k=1}^KQ_k(t)[\hat{a}_k(\alpha, \omega(t)) - \hat{b}_k(\alpha, \omega(t))] 
 + \sum_{m=1}^MZ_m(t)\hat{y}_m(\alpha, \omega(t))\right] 
 \end{eqnarray*}
Such a choice of $\alpha(t)$ is called a \emph{$C$-additive approximation.}   The case $C=0$ corresponds to achieving
the exact infimum every slot. 

\subsection{$\omega$-only policies} 

Define a $\omega$-only policy to be one that chooses $\alpha(t) \in \script{A}_{\omega(t)}$ every slot $t$ according to a stationary and randomized
decision based only on the observed $\omega(t)$ (in particular, being independent of $\script{H}(t)$). 
Assume there exists an $\epsilon>0$ and a particular $\omega$-only policy $\alpha^*(t)$ that yields the following: 
\begin{eqnarray}
\expect{\hat{a}_k(\alpha^*(t), \omega(t)) - \hat{b}_k(\alpha^*(t), \omega(t))} &\leq& -\epsilon  \: \: \forall k \in \{1, \ldots, K\}  \label{eq:omega-only1first}\\
\expect{\hat{y}_m(\alpha^*(t), \omega(t))} &\leq& - \epsilon \: \: \forall m \in \{1, \ldots, M\} \label{eq:omega-only2first} 
\end{eqnarray}
Under this assumption, it can be shown that the algorithm that uses the $\omega$-only decisions $\alpha^*(t)$ every slot $t$
satisfies the constraints (\ref{eq:opt2})-(\ref{eq:opt4}), and hence the problem (\ref{eq:opt1})-(\ref{eq:opt3}) is \emph{feasible} (meaning
that its constraints are possible to satisfy). 
Further, this assumption (similar to a Slater assumption in convex optimization theory \cite{bertsekas-nonlinear}) is only 
slightly stronger than what is required for feasibility.  Indeed, it can be shown that if the problem (\ref{eq:opt1})-(\ref{eq:opt3}) 
is feasible, then for all $\delta>0$ there must be an $\omega$-only algorithm that satisfies  \cite{sno-text}:
\begin{eqnarray*}
\expect{\hat{a}_k(\alpha^*(t), \omega(t)) - \hat{b}_k(\alpha^*(t), \omega(t))} &\leq& \delta  \: \: \forall k \in \{1, \ldots, K\}  \\
\expect{\hat{y}_m(\alpha^*(t), \omega(t))} &\leq& \delta \: \: \forall m \in \{1, \ldots, M\} 
\end{eqnarray*}

Define $\epsilon_{max}$ as the supremum of all $\epsilon$ values for which an $\omega$-only policy exists and 
satisfies (\ref{eq:omega-only1first})-(\ref{eq:omega-only2first}).   For $0 \leq \epsilon \leq \epsilon_{max}$, define $y_0^{opt}(\epsilon)$
as the infimum value of $y$ such that for all $\delta>0$, there exists an $\omega$-only policy $\alpha^*(t)$ that satisfies the 
following constraints: 
\begin{eqnarray*} 
\expect{\hat{y}_0(\alpha^*(t), \omega(t))} &\leq& y  + \delta \\
\expect{\hat{a}_k(\alpha^*(t), \omega(t)) - \hat{b}_k(\alpha^*(t), \omega(t))} &\leq& -\epsilon + \delta  \: \: \forall k \in \{1, \ldots, K\}  \\
\expect{\hat{y}_m(\alpha^*(t), \omega(t))} &\leq& - \epsilon + \delta \: \: \forall m \in \{1, \ldots, M\}
\end{eqnarray*} 
It is not difficult to show that:
\begin{itemize}  
\item These constraints are feasible whenever $0 \leq \epsilon \leq \epsilon_{max}$. 
\item  The function 
$y_0(\epsilon)$ is finite, continuous, and non-decreasing on the interval 
$0 \leq \epsilon \leq \epsilon_{max}$.  
\item The set of all such $y$ values that satisfy the above constraints is closed.  
\end{itemize} 
Thus, whenever $0 \leq \epsilon \leq \epsilon_{max}$, 
for any $\delta>0$ there exists an $\omega$-only algorithm $\alpha^*(t)$ such that: 
\begin{eqnarray} 
\expect{\hat{y}_0(\alpha^*(t), \omega(t))} &\leq& y_0^{opt}(\epsilon)  + \delta \label{eq:omega-only0} \\
\expect{\hat{a}_k(\alpha^*(t), \omega(t)) - \hat{b}_k(\alpha^*(t), \omega(t))} &\leq& -\epsilon + \delta  \: \: \forall k \in \{1, \ldots, K\} \label{eq:omega-only1} \\
\expect{\hat{y}_m(\alpha^*(t), \omega(t))} &\leq& - \epsilon + \delta \: \: \forall m \in \{1, \ldots, M\} \label{eq:omega-only2} 
\end{eqnarray} 
It can be shown that $y_0^{opt}(0)$ is the infimum time average penalty for $y_0(t)$ over \emph{all} 
algorithms that meet the constraints 
(\ref{eq:opt2})-(\ref{eq:opt4}) (not just $\omega$-only algorithms) \cite{neely-energy-it}\cite{sno-text}.
Thus, we define $y_0^{opt} \defequiv y_0^{opt}(0)$.

\subsection{Performance Bounds} 

Because our policy $\alpha(t)$ comes within $C\geq 0$ of minimizing 
the right-hand-side of (\ref{eq:dpp-opt1}) every slot $t$ (given the observed
$\script{H}(t)$), we have for all $t$ and all possible $\script{H}(t)$: 
\begin{eqnarray*}
\Delta(\script{H}(t)) + V\expect{\hat{y}_0(\alpha(t), \omega(t))|\script{H}(t)} &\leq& B + C + V\expect{\hat{y}_0(\alpha^*(t), \omega(t))|\script{H}(t)} \nonumber \\
&& + \sum_{k=1}^KQ_k(t)\expect{\hat{a}_k(\alpha^*(t), \omega(t)) - \hat{b}_k(\alpha^*(t), \omega(t))|\script{H}(t)}  \nonumber \\
&& + \sum_{m=1}^MZ_m(t)\expect{\hat{y}_m(\alpha^*(t), \omega(t))|\script{H}(t)}  
\end{eqnarray*}
where $\alpha^*(t)$ is any other decision that can be implemented on slot $t$. Now fix $\epsilon$ in the interval 
$0 < \epsilon \leq  \epsilon_{max}$. Fix any $\delta>0$. Using the policy $\alpha^*(t)$ designed to 
achieve (\ref{eq:omega-only0})-(\ref{eq:omega-only2}) and noting
that this policy makes decisions independent of $\script{H}(t)$ yields: 
\begin{eqnarray*}
&&\hspace{-1in}\Delta(\script{H}(t)) + V\expect{\hat{y}_0(\alpha(t), \omega(t))|\script{H}(t)} \leq \nonumber \\
&&  B + C + V(y_0^{opt}(\epsilon) + \delta) 
-(\epsilon-\delta)\sum_{k=1}^KQ_k(t) -(\epsilon-\delta)\sum_{m=1}^MZ_m(t) 
\end{eqnarray*}
The above holds for all $\delta>0$.  Taking a limit as $\delta\rightarrow0$ yields:
\begin{eqnarray}
\Delta(\script{H}(t)) + V\expect{y_0(t)|\script{H}(t)} &\leq& B + C + Vy_0^{opt}(\epsilon) 
-\epsilon\sum_{k=1}^KQ_k(t) -\epsilon\sum_{m=1}^MZ_m(t) \label{eq:dpp-opt2} 
\end{eqnarray}
where for simplicity we have substituted  $y_0(t) = \hat{y}_0(\alpha(t), \omega(t))$ on the left-hand-side. 
Inequality (\ref{eq:dpp-opt2})
is in the exact form of the drift-plus-penalty condition (\ref{eq:dpp-condition}).  Recall that the penalty $y_0(t)$ is deterministically
lower bounded by some finite (possibly negative) value $y_0^{min}$.  Further, the moment
bounds (\ref{eq:mb1})-(\ref{eq:mb4}) can easily be shown to imply that the boundedness assumptions (\ref{eq:boundedness1})-(\ref{eq:boundedness2}) hold.  Thus, we can apply Theorem \ref{thm:lyap-opt} to conclude that all queues are rate stable (in particular
$Z_m(t)/t \rightarrow 0$ with probability 1 for all $k$, so that the constraints (\ref{eq:opt3}) are satisfied:
\[ \limsup_{t\rightarrow\infty} \overline{y}_m(t) \leq 0 \: \: \forall m \in \{1, \ldots, M\} \: \: (w.p.1) \]
  Further:  
\begin{eqnarray*} 
\limsup_{t\rightarrow\infty} \frac{1}{t}\sum_{\tau=0}^{t-1} y_0(\tau) &\leq& y_0^{opt}(\epsilon) + (B+C)/V  \: \: (w.p.1) \\
\limsup_{t\rightarrow\infty} \frac{1}{t}\sum_{\tau=0}^{t-1} \left[\sum_{k=1}^KQ_k(\tau) + \sum_{m=1}^MZ_m(\tau)\right] &\leq& \frac{B + C + V(y_0^{opt}(\epsilon) - y_{min})}{\epsilon} \: \: (w.p.1) 
\end{eqnarray*}
However, the above two bounds hold for all $\epsilon$ such that $0 < \epsilon \leq \epsilon_{max}$, and hence the two performance
bounds can be optimized separately over this interval.  Taking a limit as $\epsilon\rightarrow 0$  in the first bound 
and noting by continuity  that $\lim_{\epsilon\rightarrow0} y_0^{opt}(\epsilon) = y_0^{opt}(0) \defequiv y_0^{opt}$ yields: 
\begin{equation} \label{eq:power-bound} 
 \limsup_{t\rightarrow\infty} \frac{1}{t}\sum_{\tau=0}^{t-1} y_0(\tau) \leq  y_0^{opt}  + (B+C)/V \: \: (w.p.1) 
 \end{equation} 
Using $\epsilon=\epsilon_{max}$ in the second bound yields: 
\begin{equation} \label{eq:queue-bound} 
 \limsup_{t\rightarrow\infty} \frac{1}{t}\sum_{\tau=0}^{t-1} \left[\sum_{k=1}^KQ_k(\tau) + \sum_{m=1}^MZ_m(\tau)\right] \leq \frac{B +C+ V(y_0^{opt}(\epsilon_{max}) - y_{min})}{\epsilon_{max}} \: \: (w.p.1)  
 \end{equation} 

Thus, this simple dynamic algorithm satisfies the desired time average penalty 
constraints, stabilizes all queues $Q_k(t)$, and yields a time average penalty for $y_0(t)$ that is 
within $B/V$ of the optimal value $y_0^{opt}$.  The performance gap 
$B/V$ can be made arbitrarily small by choosing the $V$ parameter large (as shown
by (\ref{eq:power-bound})). 
The tradeoff is a time average queue backlog that is $O(V)$ (as shown by (\ref{eq:queue-bound})).  

By (\ref{eq:togethernow}), 
the bound (\ref{eq:queue-bound}) can be improved, at the expense of sometimes 
making it less easy to compute, by replacing
``$-y_{min}$'' on the right-hand-side with ``$-\liminf_{t\rightarrow\infty} \frac{1}{t}\sum_{\tau=0}^{t-1} y_0(\tau)$.''
Further, we note that the concept
of \emph{place-holder backlog} from \cite{neely-asilomar08} is compatible with this 
analysis and can often be used together with the above to provide 
improved backlog bounds.

\section{Conclusions} 

This work derives an extended 
drift-plus-penalty theorem for discrete time queueing systems. The theorem 
ensures all queues satisfy all major forms of stability, and that 
time averages meet desired constraints with probability 1. 
This extends  prior results that were known to hold only for time average expectations.  
The boundedness conditions required for the theorem are mild  and easily checked.  
In particular, the theorem applies to systems with an uncountably infinite
number of possible events, to Markov systems with an uncountably infinite state space (possibly neither irreducible nor aperiodic), and to non-Markov systems.  Our analysis combined the Kolmogorov law of large numbers for  martingale differences with
the drift-plus-penalty method from Lyapunov optimization.   
The results are applicable to a broad class of stochastic queueing networks, and are also 
useful in other contexts. 

%


\section*{Appendix A --- Proof of Corollary \ref{lem:supermartingale}} 

Suppose the assumptions of Corollary \ref{lem:supermartingale} hold, so that $\expect{X(t)|\script{H}_X(t)} \leq B$ for all
$t$ and all $\script{H}_X(t)$, and: 
\[ \sum_{t=1}^{\infty}\frac{\expect{X(t)^2}}{t^2} < \infty \] 
Define $\tilde{X}(t) \defequiv X(t) - \expect{X(t)|\script{H}_X(t)}$. 
Clearly $\expect{\tilde{X}(t)|\script{H}_X(t)} = 0$ for all $t$ and all $\script{H}_X(t)$.   Now define
$\script{H}_{\tilde{X}}(t)$ as the history of the $\tilde{X}(t)$ process: 
\[ \script{H}_{\tilde{X}}(t) \defequiv \{\tilde{X}(0), \ldots, \tilde{X}(t-1)\} \]
It is easy to see that 
conditioning on $\script{H}_{\tilde{X}}(t)$ is the same as conditioning on $\script{H}_{X}(t)$, because these provide
the same information.  Thus $\expect{\tilde{X}(t)|\script{H}_{\tilde{X}}(t)} = 0$ for all $t$ and all possible $\script{H}_{\tilde{X}}(t)$. 
To apply the result of Theorem \ref{lem:martingale}, 
we show that the second moment of $\tilde{X}(t)$ satisfies the condition (\ref{eq:vc2}). We
have for all $t$: 
\begin{eqnarray}
 \expect{\tilde{X}(t)^2} &=& \expect{(X(t) - \expect{X(t)|\script{H}_X(t)})^2} \nonumber \\
 &=&\expect{X(t)^2} + \expect{\expect{X(t)|\script{H}_X(t)}^2} - 2\expect{X(t)\expect{X(t)|\script{H}_X(t)}} \nonumber \\
 &\leq& \expect{X(t)^2} + \expect{\expect{X(t)^2|\script{H}_X(t)}} - 2\expect{X(t)\expect{X(t)|\script{H}_X(t)}} \label{eq:jensen}  \\
 &=& 2\expect{X(t)^2} - 2\expect{X(t)\expect{X(t)|\script{H}_X(t)}}\nonumber \\
 &\leq& 2\expect{X(t)^2} + 2\sqrt{\expect{X(t)^2}\expect{\expect{X(t)|\script{H}_X(t)}^2}} \label{eq:cs}  \\
 &\leq& 2\expect{X(t)^2} + 2\sqrt{\expect{X(t)^2}\expect{\expect{X(t)^2|\script{H}_X(t)}}} \nonumber \\
 &\leq& 2\expect{X(t)^2} + 2\sqrt{\expect{X(t)^2}}\sqrt{\expect{X(t)^2}} = 4\expect{X(t)^2} \nonumber 
 \end{eqnarray}
 where (\ref{eq:jensen}) follows by Jensen's inequality, and (\ref{eq:cs}) follows by the Cauchy-Schwartz inner product
 inequality.  It follows that:
\[ \sum_{t=1}^{\infty} \frac{\expect{\tilde{X}(t)^2}}{t^2} \leq \sum_{t=1}^{\infty} \frac{4\expect{X(t)^2}}{t^2} < \infty  \]
   Thus, the result of Theorem \ref{lem:martingale} holds for the process $\tilde{X}(t)$, and so: 
 \begin{equation*} 
  \lim_{t\rightarrow\infty} \frac{1}{t}\sum_{\tau=0}^{t-1} \tilde{X}(\tau) = 0 \: \: \: (w.p.1) 
  \end{equation*} 
 That is: 
 \begin{equation} 
  \lim_{t\rightarrow\infty} \frac{1}{t}\sum_{\tau=0}^{t-1} [X(\tau) - \expect{X(\tau)|\script{H}(\tau)}] = 0 \: \: \: (w.p.1) \label{eq:supermartingale-1} 
 \end{equation} 
Using the fact that $\expect{X(\tau)|\script{H}(\tau)} \leq B$ yields: 
\[ \frac{1}{t}\sum_{\tau=0}^{t-1}[X(\tau) - \expect{X(\tau)|\script{H}(\tau)}] \geq \frac{1}{t}\sum_{\tau=0}^{t-1}[X(\tau) - B] \]
Taking a $\limsup$ of the above as $t\rightarrow\infty$ and using (\ref{eq:supermartingale-1}) yields:
\[ \limsup_{t\rightarrow\infty} \frac{1}{t}\sum_{\tau=0}^{t-1} [X(\tau) - B] \leq 0 \: \: (w.p.1) \]
 This proves the result. 

\section*{Appendix B --- Proof of Claim 1 in Theorem \ref{thm:var-lyap}} 

Here we prove the Claim 1 needed in Theorem \ref{thm:var-lyap}.  Recall that $\delta(t) \defequiv L(\bv{Q}(t+1)) - L(\bv{Q}(t))$, 
where $L(\bv{Q}(t))$ is defined in (\ref{eq:lyapunov-function}) with any weights $w_k>0$. 
We prove Claim 1 with two lemmas. 

\begin{lem} \label{lem:finite-var} 
Suppose there is a finite constant $D>0$ such that for all $t$ and all possible
$\bv{Q}(t)$ we have: 
\[ \expect{d_k(t)^4|\bv{Q}(t)} \leq D \: \: \forall k \in \{1, \ldots, K\}, \forall t \in \{0, 1, 2, \ldots\}  \]
Further suppose that:  
\begin{equation} \label{eq:appc}
\sum_{t=1}^{\infty} \frac{\expect{Q_k(t)^2}}{t^2} < \infty 
\end{equation} 
Then: 
\[ \sum_{t=1}^{\infty}\frac{\expect{\delta(t)^2}}{t^2} < \infty \]
\end{lem}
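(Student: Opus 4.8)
The plan is to expand the Lyapunov difference $\delta(t)$ explicitly in terms of the queue values and the one-slot changes $d_k(t)$, bound its square by a combination of terms $Q_k(t)^2 d_k(t)^2$ and $d_k(t)^4$, and then take expectations using the tower property together with the fourth-moment hypothesis.

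First I would write $Q_k(t+1)^2 - Q_k(t)^2 = 2Q_k(t) d_k(t) + d_k(t)^2$, so that
\[ \delta(t) = \sum_{k=1}^K w_k Q_k(t) d_k(t) + \frac{1}{2}\sum_{k=1}^K w_k d_k(t)^2 . \]
Applying $(a+b)^2 \leq 2a^2 + 2b^2$ and then the Cauchy-Schwarz inequality in the form $(\sum_{k=1}^K c_k)^2 \leq K\sum_{k=1}^K c_k^2$ to each of the two sums gives a bound of the shape
\[ \delta(t)^2 \leq 2K \sum_{k=1}^K w_k^2 Q_k(t)^2 d_k(t)^2 + \frac{K}{2}\sum_{k=1}^K w_k^2 d_k(t)^4 . \]

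Next I would take expectations of both sides. For the second group of terms, $\expect{d_k(t)^4} \leq D$ follows from the hypothesis $\expect{d_k(t)^4|\bv{Q}(t)} \leq D$ and iterated expectations, so this contributes at most a constant $C_2 \defequiv \frac{K}{2}\sum_{k=1}^K w_k^2 D$ that does not depend on $t$. For the first group I would condition on $\bv{Q}(t)$ and use the tower property, $\expect{Q_k(t)^2 d_k(t)^2} = \expect{Q_k(t)^2 \expect{d_k(t)^2|\bv{Q}(t)}}$, and then bound $\expect{d_k(t)^2|\bv{Q}(t)} \leq \sqrt{\expect{d_k(t)^4|\bv{Q}(t)}} \leq \sqrt{D}$ by Jensen's inequality. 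This yields $\expect{Q_k(t)^2 d_k(t)^2} \leq \sqrt{D}\,\expect{Q_k(t)^2}$, hence $\expect{\delta(t)^2} \leq C_1 \sum_{k=1}^K \expect{Q_k(t)^2} + C_2$ for a suitable constant $C_1 \defequiv 2K\sqrt{D}\max_k w_k^2$.

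Finally I would divide by $t^2$, sum over $t \geq 1$, and invoke the two convergence facts: $\sum_{t=1}^\infty \expect{Q_k(t)^2}/t^2 < \infty$ for each $k$ by hypothesis (\ref{eq:appc}), and $\sum_{t=1}^\infty 1/t^2 < \infty$. Together these give $\sum_{t=1}^\infty \expect{\delta(t)^2}/t^2 < \infty$, as claimed. The only step requiring care is the conditioning in the cross term, since $d_k(t)$ is not independent of $\bv{Q}(t)$; conditioning on $\bv{Q}(t)$ before applying the fourth-moment bound handles this cleanly, so I do not expect a genuine obstacle here.
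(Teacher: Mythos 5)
Your proposal is correct and follows essentially the same approach as the paper's Appendix B: both expand $\delta(t)$ as a sum of $Q_k(t)d_k(t)$ and $d_k(t)^2$ terms, reduce $\expect{\delta(t)^2}$ to the moments $\expect{Q_k(t)^2 d_k(t)^2}$ and $\expect{d_k(t)^4}$, and then bound the former by conditioning on $\bv{Q}(t)$, applying Jensen to get $\expect{d_k(t)^2|\bv{Q}(t)} \leq \sqrt{D}$, and using the tower property. The only difference is bookkeeping: the paper expands $\delta(t)^2$ as a double sum over $(k,i)$ and applies Cauchy--Schwarz to each cross term, whereas you eliminate cross terms up front via $(a+b)^2 \leq 2a^2+2b^2$ and $(\sum_k c_k)^2 \leq K\sum_k c_k^2$, which is slightly cleaner but mathematically equivalent.
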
 
\begin{proof} 
We have: 
\begin{eqnarray*}
\delta(t) \defequiv  \sum_{k=1}^K\frac{w_k}{2}[Q_k(t+1)^2 - Q_k(t)^2] &=& \sum_{k=1}^K\frac{w_k}{2}[(Q_k(t) + d_k(t))^2 - Q_k(t)^2]\\
 &=&  \sum_{k=1}^K\frac{w_k}{2}[2Q_k(t)d_k(t) + d_k(t)^2]
 \end{eqnarray*}
 Thus: 
 \begin{eqnarray*}
 \expect{\delta(t)^2} =  \sum_{k=1}^K\sum_{i=1}^K\frac{w_kw_i}{4}\expect{(2Q_k(t)d_k(t)+d_k(t)^2)(2Q_i(t)d_i(t) + d_i(t)^2)} 
 \end{eqnarray*}
Further: 
\begin{eqnarray*}
\expect{(2Q_k(t)d_k(t) + d_k(t)^2)(2Q_i(t)d_i(t) + d_i(t)^2)} &=& 4\expect{Q_k(t)Q_i(t)d_k(t)d_i(t)} + \expect{d_k(t)^2d_i(t)^2} \\
&& +  2\expect{Q_k(t)d_k(t)d_i(t)^2} + 2\expect{Q_i(t)d_i(t)d_k(t)^2}  \\
&\leq& 4\sqrt{\expect{Q_k(t)^2d_k(t)^2}\expect{Q_i(t)^2d_i(t)^2}}  \\
&&+ \sqrt{\expect{d_k(t)^4}\expect{d_i(t)^4}} \\
&& + 2\sqrt{\expect{Q_k(t)^2d_k(t)^2}\expect{d_i(t)^4}} \\
&&+ 2\sqrt{\expect{Q_i(t)^2d_i(t)^2}\expect{d_k(t)^4}} 
\end{eqnarray*}
Because $\expect{d_k(t)^4|\bv{Q}(t)} \leq D$ for all possible $\bv{Q}(t)$, we have from iterated
expectations that for all $k \in \{1, \ldots, K\}$
\[ \expect{d_k(t)^4} \leq D \] 
Further, for all $k \in \{1, \ldots, K\}$ we have: 
\begin{eqnarray*}
 \expect{Q_k(t)^2d_k(t)^2} &=& \expect{\expect{Q_k(t)^2d_k(t)^2|\bv{Q}(t)}} \\
&=& \expect{Q_k(t)^2\expect{d_k(t)^2|\bv{Q}(t)}} \\
&\leq&\expect{Q_k(t)^2\sqrt{\expect{d_k(t)^4|\bv{Q}(t)}}} \\
&\leq& \expect{Q_k(t)^2D} \\
&\leq& D\expect{Q_{max}(t)^2} 
\end{eqnarray*}
where we define $Q_{max}(t)^2 \defequiv \max_{k\in\{1, \ldots, K\}} Q_k(t)^2$. Thus: 
\begin{eqnarray*}
\expect{(2Q_k(t)d_k(t) + d_k(t)^2)(2Q_i(t)d_i(t) + d_i(t)^2)} &\leq& 4D\expect{Q_{max}(t)^2} + D +4D \sqrt{\expect{Q_{max}(t)^2}} \\
&\leq& D_1\expect{Q_{max}(t)^2} + D_2
\end{eqnarray*}
for some positive constants $D_1$,  $D_2$. Thus: 
\begin{eqnarray*}
 \expect{\delta(t)^2} &\leq& (D_1\expect{Q_{max}(t)^2} + D_2)\sum_{k=1}^K\sum_{i=1}^K\frac{w_kw_i}{4} \\
 &\leq& D_3 + D_4\sum_{k=1}^K\expect{Q_k(t)^2} 
 \end{eqnarray*}
 for some positive constants $D_3$, $D_4$.  Thus: 
 \begin{eqnarray}
\sum_{t=1}^{\infty} \frac{\expect{\delta(t)^2}}{t^2} \leq \sum_{t=1}^{\infty} \frac{D_3}{t^2} + D_4\sum_{k=1}^K\sum_{t=1}^{\infty} \frac{\expect{Q_k(t)^2}}{t^2} < \infty
 \end{eqnarray}
\end{proof} 

Now fix any constants $V$, $B$, $p^*$, $\epsilon$, and recall that $\beta(t)$ is defined: 
\[ \beta(t) \defequiv Vp(t) - B - Vp^* + \epsilon\sum_{k=1}^K|Q_k(t)| \]

\begin{lem} \label{lem:finite-var2} 
Suppose that for all $k \in \{1, \ldots, K\}$ we have:  
\begin{eqnarray} 
\sum_{t=1}^{\infty} \frac{\expect{Q_k(t)^2}}{t} &<& \infty \label{eq:zoo1} \\
\sum_{t=1}^{\infty} \frac{\expect{p(t)^2}}{t^2} &<& \infty \label{eq:zoo2} 
\end{eqnarray} 
Then: 
\[ \sum_{t=1}^{\infty}\frac{\expect{\beta(t)^2}}{t^2} < \infty \]
\end{lem}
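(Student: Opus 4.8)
The plan is to control $\expect{\beta(t)^2}$ by a constant multiple of $\expect{p(t)^2} + \sum_{k=1}^K \expect{Q_k(t)^2} + 1$, and then sum the resulting bound against $1/t^2$ one term at a time. The elementary fact I would use is that for any real numbers $a_1, \ldots, a_n$ one has $(a_1 + \cdots + a_n)^2 \le n(a_1^2 + \cdots + a_n^2)$. Applying this to $\beta(t)$ viewed as a sum of the $K+2$ terms $Vp(t)$, $\epsilon|Q_1(t)|, \ldots, \epsilon|Q_K(t)|$, and $-(B + Vp^*)$ gives
\[ \beta(t)^2 \le (K+2)\Big[ V^2 p(t)^2 + \epsilon^2 \sum_{k=1}^K Q_k(t)^2 + (B + Vp^*)^2 \Big] , \]
and taking expectations preserves the inequality (all quantities involved are finite, since $\expect{p(t)^2}$ and $\expect{Q_k(t)^2}$ appear in the convergent series of the hypotheses).

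Next I would divide by $t^2$, sum over $t \in \{1, 2, \ldots\}$, and split the right-hand side into three pieces. The $p(t)^2$ piece is finite by hypothesis (\ref{eq:zoo2}). The $Q_k(t)^2$ piece is finite because $1/t^2 \le 1/t$ for every $t \ge 1$, so $\sum_{t=1}^\infty \expect{Q_k(t)^2}/t^2 \le \sum_{t=1}^\infty \expect{Q_k(t)^2}/t < \infty$ by hypothesis (\ref{eq:zoo1}), and there are only finitely many values of $k$. The constant piece is finite because $\sum_{t=1}^\infty 1/t^2 < \infty$. Adding the three finite quantities yields $\sum_{t=1}^\infty \expect{\beta(t)^2}/t^2 < \infty$, which is the claim.

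I do not expect a genuine obstacle here: once the bound $(\sum a_i)^2 \le n \sum a_i^2$ is in hand the argument is pure bookkeeping. The only point worth flagging is to use that bound rather than expanding the square directly, so that no cross terms such as $\expect{p(t)\,|Q_k(t)|}$ arise and force a separate Cauchy--Schwarz estimate. I would also note in passing that hypothesis (\ref{eq:zoo1}) is in fact stronger than what is used --- only $\sum_{t=1}^\infty \expect{Q_k(t)^2}/t^2 < \infty$ is needed --- which is precisely the form in which this lemma, combined with Lemma \ref{lem:finite-var}, establishes Claim 1 of Theorem \ref{thm:var-lyap}.
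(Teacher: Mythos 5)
Your proof is correct and arrives at the same intermediate estimate as the paper, namely $\expect{\beta(t)^2} \le D_1 + D_2\expect{p(t)^2} + D_3\sum_{k=1}^K\expect{Q_k(t)^2}$ for suitable finite constants, after which both arguments divide by $t^2$ and sum. Where you differ is in the route to that estimate: the paper expands $\expect{\beta(t)^2}$ in full and then disposes of the cross terms $\expect{(Vp(t)-B-Vp^*)|Q_k(t)|}$ and $\expect{|Q_k(t)||Q_i(t)|}$ one at a time using Cauchy--Schwarz together with $|ab|\le\frac{1}{2}(a^2+b^2)$, whereas you sidestep cross terms entirely by writing $\beta(t)$ as a sum of $K+2$ terms and invoking $(\sum_{i=1}^n a_i)^2 \le n\sum_{i=1}^n a_i^2$. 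That is a clean simplification of the bookkeeping, since nothing is gained by tracking the cross terms individually. Your side remark that hypothesis (\ref{eq:zoo1}) is stronger than what the proof uses is also correct: both your argument and the paper's need only $\sum_{t\ge 1}\expect{Q_k(t)^2}/t^2<\infty$, which is precisely what Theorem~\ref{thm:var-lyap} supplies via (\ref{eq:var-lyap}); the $1/t$ in the lemma's statement appears to be a typographical slip for $1/t^2$.
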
 

Note that Lemmas \ref{lem:finite-var} and \ref{lem:finite-var2} together prove Claim 1.  It
remains only to prove Lemma \ref{lem:finite-var2}. 

\begin{proof} (Lemma \ref{lem:finite-var2})
We have: 
\begin{eqnarray*}
\expect{\beta(t)^2} &=& \expect{(Vp(t) - B - Vp^*)^2} + \epsilon^2\sum_{k=1}^K\sum_{i=1}^K\expect{|Q_k(t)||Q_i(t)|} \\
&& + \epsilon\sum_{k=1}^{K}\expect{(Vp(t) - B - Vp^*)|Q_k(t)|}  \\
&\leq& \expect{(Vp(t) - B - Vp^*)^2}  + \epsilon^2\sum_{k=1}^K\sum_{i=1}^K\sqrt{\expect{Q_k(t)^2}\expect{Q_i(t)^2}} \\
&& + \epsilon\sum_{k=1}^K\sqrt{\expect{(Vp(t) - B - Vp^*)^2}\expect{Q_k(t)^2}} 
\end{eqnarray*}
However, because $|ab| \leq \frac{1}{2}[a^2 + b^2]$ for all real numbers $a,b$,  we have: 
\[ \sqrt{\expect{(Vp(t) - B - Vp^*)^2}\expect{Q_k(t)^2}} \leq \frac{1}{2}\expect{(Vp(t) - B - Vp^*)^2}+\frac{1}{2}\expect{Q_k(t)^2}\]
Thus: 
\begin{eqnarray*}
\expect{\beta(t)^2} &\leq& \expect{(Vp(t) - B - Vp^*)^2}  + \epsilon^2\sum_{k=1}^K\sum_{i=1}^K\sqrt{\expect{Q_k(t)^2}\expect{Q_i(t)^2}} \\
&& + \frac{\epsilon}{2}\sum_{k=1}^K\expect{(Vp(t) - B - Vp^*)^2}+\frac{\epsilon}{2}\sum_{k=1}^K\expect{Q_k(t)^2} \\
&\leq& (1+\epsilon K/2)\expect{(Vp(t) - B - Vp^*)^2} + (\epsilon^2 K^2 + \epsilon K/2)\expect{Q_{max}(t)^2} 
\end{eqnarray*}
where we define $Q_{max}(t)^2 \defequiv \max_{k\in\{1, \ldots, K\}} Q_k(t)^2$.   It follows that there are finite
constants $D_1, D_2, D_3$ such that: 
\[ \expect{\beta(t)^2} \leq D_1 + D_2\expect{p(t)^2} + D_3\expect{Q_{max}(t)^2} \]
Because $Q_{max}(t)^2 \leq \sum_{k=1}^KQ_k(t)^2$, we have: 
\[ \expect{\beta(t)^2} \leq D_1 +D_2\expect{p(t)^2} + D_3\sum_{k=1}^K\expect{Q_k(t)^2} \]
Thus, from (\ref{eq:zoo1})-(\ref{eq:zoo2}) we have: 
\begin{eqnarray*}
\sum_{t=1}^{\infty} \frac{\expect{\beta(t)^2}}{t^2} &\leq& \sum_{t=1}^{\infty}\frac{D_1}{t^2} + D_2\sum_{t=1}^{\infty}\frac{\expect{p(t)^2}}{t^2} + D_3\sum_{k=1}^K \sum_{t=1}^{\infty}\frac{\expect{Q_k(t)^2}}{t^2} < \infty
\end{eqnarray*}
which proves the result. 
\end{proof}

\section*{Appendix C --- Proof of Theorem \ref{thm:quad-lyap} parts (a) and (b)}

\begin{proof} (Theorem \ref{thm:quad-lyap} part (a)) 
The proof closely follows a similar result derived for exponential Lyapunov functions with 
deterministically bounded queue changes in \cite{longbo-lagrange}. 
From (\ref{eq:quad-lyap-drift}) we have: 
\[ \expect{L(\bv{Q}(t+1))| \bv{Q}(t)}  \leq L(\bv{Q}(t)) + \tilde{B} - \epsilon \sum_{k=1}^K |Q_k(t)| \]
Therefore: 
\begin{eqnarray*}
 \expect{\norm{\bv{Q}(t+1)}^2|\bv{Q}(t)} &\leq& \norm{\bv{Q}(t)}^2 + \tilde{B} - \epsilon \sum_{k=1}^K|Q_k(t)| \\
 &\leq& \norm{\bv{Q}(t)}^2 + \tilde{B} - \frac{\epsilon\sqrt{2}}{\sqrt{w_{max}}}\sum_{k=1}^K\frac{\sqrt{w_k}}{\sqrt{2}}|Q_k(t)| \\
 &\leq& \norm{\bv{Q}(t)}^2 + \tilde{B} - \frac{\epsilon\sqrt{2}}{\sqrt{w_{max}}}\norm{\bv{Q}(t)}  \\
 &=& \norm{\bv{Q}(t)}^2 + \tilde{B} -4c\norm{\bv{Q}(t)}  \\
 \end{eqnarray*}
 where $w_{max} \defequiv \max_{k \in \{1, \ldots, K\}} w_k$ and $c \defequiv \epsilon\sqrt{2}/(4\sqrt{w_{max}})$. 
 The third
 inequality above follows by (\ref{eq:triangle}). 
Now suppose that $\norm{\bv{Q}(t)} \geq \tilde{B}/(2c)$.  It follows that: 
\begin{eqnarray*}
  \expect{\norm{\bv{Q}(t+1)}^2|\bv{Q}(t)} &\leq& \norm{\bv{Q}(t)}^2 + \tilde{B} - 2c\norm{\bv{Q}(t)} - 2c\norm{\bv{Q}(t)} \\
&\leq& \norm{\bv{Q}(t)}^2  - 2c\norm{\bv{Q}(t)}  \\
&\leq& \norm{\bv{Q}(t)}^2  - 2c\norm{\bv{Q}(t)}  + c^2 \\
&=& (\norm{\bv{Q}(t)} - c)^2
\end{eqnarray*}
However, we have by Jensen's inequality: 
\begin{eqnarray*}
 \expect{\norm{\bv{Q}(t+1)}|\bv{Q}(t)}^2 \leq  \expect{\norm{\bv{Q}(t+1)}^2|\bv{Q}(t)}
\end{eqnarray*}
Therefore: 
\[ \expect{\norm{\bv{Q}(t+1)}|\bv{Q}(t)}^2 \leq (\norm{\bv{Q}(t)} - c)^2 \]
Assume now that $\norm{\bv{Q}(t)} \geq \max[\tilde{B}/(2c), c]$, so that we have both that 
$\norm{\bv{Q}(t)} - c \geq 0$ and 
$\norm{\bv{Q}(t)} \geq \tilde{B}/(2c)$. 
Taking square roots of the above inequality then proves that whenever $\norm{\bv{Q}(t)} \geq \max[\tilde{B}/(2c), c]$ we have: 
\[ \expect{\norm{\bv{Q}(t+1)}|\bv{Q}(t)} \leq \norm{\bv{Q}(t)} - c \]
Defining $a \defequiv \max[\tilde{B}/(2c), c]$ proves part (a).
\end{proof} 

\begin{proof} (Theorem \ref{thm:quad-lyap} part (b)) 
We have $\bv{Q}(t+1) = \bv{Q}(t) + \bv{d}(t)$, where $\bv{d}(t)  \defequiv (d_1(t), \ldots, d_K(t))$.
Define $\gamma(t) \defequiv \norm{\bv{Q}(t+1)} - \norm{\bv{Q}(t)}$. 
Then $|\gamma(t)| \leq \norm{\bv{d}(t)}$ (by (\ref{eq:t2})), and we have: 
\begin{eqnarray}
\norm{\bv{Q}(t+1)}^4 &=&(\norm{\bv{Q}(t)} + \gamma(t))^4 \nonumber \\
&=& \norm{\bv{Q}(t)}^4 + 4\norm{\bv{Q}(t)}^3\gamma(t) + 6\norm{\bv{Q}(t)}^2\gamma(t)^2 \nonumber \\
&& + 4\norm{\bv{Q}(t)}\gamma(t)^3 + \gamma(t)^4 \label{eq:zaz} 
\end{eqnarray}
However, note by part (a) that $\expect{\gamma(t)|\bv{Q}(t)} \leq -c$ whenever $\norm{\bv{Q}(t)} \geq a$
(for some constants $c>0$, $a>0$).  Thus: 
\[ 4\norm{\bv{Q}(t)}^3\expect{\gamma(t)|\bv{Q}(t)} \leq  \left\{ \begin{array}{ll}
                          -4c\norm{\bv{Q}(t)}^3  &\mbox{ if $\norm{\bv{Q}(t)} \geq a$} \\
                              4a^3\expect{\norm{\bv{d}(t)}|\bv{Q}(t)} & \mbox{ otherwise} 
                            \end{array}
                                 \right.\]
Hence: 
\begin{eqnarray*}
4\norm{\bv{Q}(t)}^3\expect{\gamma(t)|\bv{Q}(t)} &\leq& -4c\norm{\bv{Q}(t)}^3 + 4a^3\expect{\norm{\bv{d}(t)}|\bv{Q}(t)} + 4ca^3
\end{eqnarray*}
Taking conditional expectations of (\ref{eq:zaz}) and substituting the above yields: 
\begin{eqnarray}
\expect{\norm{\bv{Q}(t+1)}^4|\bv{Q}(t)} &\leq& \norm{\bv{Q}(t)}^4 - 4c\norm{\bv{Q}(t)}^3 + 4a^3\expect{\norm{\bv{d}(t)}|\bv{Q}(t)} + 4ca^3 \nonumber \\
&& + 6\norm{\bv{Q}(t)}^2\expect{\norm{\bv{d}(t)}^2|\bv{Q}(t)} + \nonumber \\
&& 4\norm{\bv{Q}(t)}\expect{\norm{\bv{d}(t)}^3|\bv{Q}(t)} + \expect{\norm{\bv{d}(t)}^4|\bv{Q}(t)}  \label{eq:great} 
\end{eqnarray}
Because $\norm{\bv{d}(t)} \leq g \sum_{k=1}^K|d_k(t)|$ (where $g \defequiv \sqrt{w_{max}/2}$, with $w_{max} \defequiv \max_{k\in\{1, \ldots, K\}} w_k$),  
we have: 
\begin{eqnarray*}
\expect{\norm{\bv{d}(t)}^4|\bv{Q}(t)} \leq g^4\sum_{k=1}^K\sum_{i=1}^K\sum_{j=1}^K\sum_{l=1}^K\expect{|d_k(t)||d_i(t)||d_j(t)||d_l(t)| | \bv{Q}(t)}
\end{eqnarray*}
However, by repeated application of Cauchy-Schwartz and the fact that $\expect{d_k(t)^4|\bv{Q}(t)} \leq D$, we have: 
\[ \expect{|d_k(t)||d_i(t)||d_j(t)||d_l(t)| | \bv{Q}(t)} \leq D \]
Thus: 
\begin{equation} \label{eq:great2}
 \expect{\norm{\bv{d}(t)}^4 | \bv{Q}(t)} \leq g^4 K^4D  
 \end{equation} 
Further, by Jensen's inequality: 
\begin{eqnarray}
\expect{\norm{\bv{d}(t)}^3|\bv{Q}(t)} &\leq& \expect{\norm{\bv{d}(t)}^4|\bv{Q}(t)}^{3/4} \leq D^{3/4} \label{eq:great3}  \\
\expect{\norm{\bv{d}(t)}^2|\bv{Q}(t)} &\leq& \expect{\norm{\bv{d}(t)}^4|\bv{Q}(t)}^{1/2} \leq D^{1/2} \label{eq:great4} \\
\expect{\norm{\bv{d}(t)}|\bv{Q}(t)} &\leq& \expect{\norm{\bv{d}(t)}^4|\bv{Q}(t)}^{1/4} \leq D^{1/4}  \label{eq:great5} 
\end{eqnarray}
Substituting (\ref{eq:great2})-(\ref{eq:great5}) into (\ref{eq:great}) yields:  
\begin{eqnarray}
\expect{\norm{\bv{Q}(t+1)}^4|\bv{Q}(t)} - \norm{\bv{Q}(t)}^4  &\leq&  - 4c\norm{\bv{Q}(t)}^3 + 4a^3D^{1/4} + 4ca^3\nonumber \\
&& + 6\norm{\bv{Q}(t)}^2D^{1/2} + 4\norm{\bv{Q}(t)}D^{3/4} + D \label{eq:great6} 
\end{eqnarray}
Because the term $-4c\norm{\bv{Q}(t)}^3$ is the dominant term on the right-hand-side above (for $\norm{\bv{Q}(t)}$ large), 
 there must be a constant $b_1>0$ such that: 
 \[ -2c\norm{\bv{Q}(t)}^3 +  4a^3D^{1/4} + 4ca^3 + 6\norm{\bv{Q}(t)}^2D^{1/2} + 4\norm{\bv{Q}(t)}D^{3/4} + D \leq 0 \]
 whenever $\norm{\bv{Q}(t)} \geq b_1$.  Thus, the right-hand-side of (\ref{eq:great6}) is less than or equal 
 to $-2c\norm{\bv{Q}(t)}^3$ whenever $\norm{\bv{Q}(t)} \geq b_1$, and is less than or equal to 
 $4a^3D^{1/4} + 4ca^3 + 6b_1^2D^{1/2} + 4b_1D^{3/4} + D$ otherwise.  It follows that there
 are constants $b_2>0$, $c>0$ such that for all $t$ and all $\bv{Q}(t)$ we have: 
 \[ \expect{\norm{\bv{Q}(t+1)}^4|\bv{Q}(t)} - \norm{\bv{Q}(t)}^4 \leq b_2 - 2c\norm{\bv{Q}(t)}^3 \]
 Taking expectations of the above yields: 
 \[ \expect{\norm{\bv{Q}(t+1)}^4} - \expect{\norm{\bv{Q}(t)}^4} \leq b_2 - 2c\expect{\norm{\bv{Q}(t)}^3} \]
 Summing the above over $t \in \{0, \ldots, M-1\}$ and dividing by $M$ yields: 
 \[ \frac{\expect{\norm{\bv{Q}(M)}^4} - \expect{\norm{\bv{Q}(0)}^4}}{M} \leq b_2 - 2c\frac{1}{M}\sum_{t=0}^{M-1}\expect{\norm{\bv{Q}(t)}^3} \]
 Rearranging terms and using the fact that $\norm{\bv{Q}(M)}^4 \geq 0$ yields: 
 \[ \frac{1}{M} \sum_{t=0}^{M-1} \expect{\norm{\bv{Q}(t)}^3} \leq \frac{b_2}{2c} + \frac{\expect{\norm{\bv{Q}(0)}^4}}{2cM}  \leq
  \frac{b_2}{2c} + \frac{\expect{\norm{\bv{Q}(0)}^4}}{2c}
 \]
 This completes the proof of  part (b). 
\end{proof}

\section*{Appendix D} 

\begin{lem} \label{lem:appd} Suppose $\{x_i\}_{i=1}^{\infty}$ is an infinite 
sequence of non-negative real numbers such that there are constants $C>0$ and $0 \leq \theta < 1$
such that: 
\[ \sum_{i=1}^M x_i \leq CM^{1+\theta} \: \: \forall M \in \{1, 2, 3, \ldots\} \]
Then: 
\[ \sum_{i=1}^{\infty} \frac{x_i}{i^2} < \infty \]
\end{lem}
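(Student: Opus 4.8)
The plan is to use Abel summation (summation by parts) to transfer the hypothesis on the partial sums $S_M \defequiv \sum_{i=1}^M x_i$ directly into a bound on the partial sums of $\sum_i x_i/i^2$. Setting $S_0 \defequiv 0$, so that $x_i = S_i - S_{i-1}$, summation by parts gives for every $N$:
\[ \sum_{i=1}^N \frac{x_i}{i^2} = \frac{S_N}{N^2} + \sum_{i=1}^{N-1} S_i\left(\frac{1}{i^2} - \frac{1}{(i+1)^2}\right). \]

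First I would dispose of the boundary term: since $S_N \leq C N^{1+\theta}$, we have $S_N/N^2 \leq C N^{\theta-1}$, which tends to $0$ because $\theta < 1$; in particular this term stays uniformly bounded in $N$.

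Next I would estimate the telescoping coefficients. A direct computation gives $\frac{1}{i^2} - \frac{1}{(i+1)^2} = \frac{2i+1}{i^2(i+1)^2} \leq \frac{3}{i^3}$ for all $i \geq 1$ (using $2i+1 \leq 3i$ and $(i+1)^2 \geq i^2$). Combined with $S_i \leq C i^{1+\theta}$, each summand is at most $3C\, i^{1+\theta}/i^3 = 3C/i^{2-\theta}$. Since $\theta < 1$ forces $2-\theta > 1$, the $p$-series $\sum_{i \geq 1} i^{-(2-\theta)}$ converges, so the partial sums $\sum_{i=1}^{N-1} S_i\bigl(\tfrac{1}{i^2} - \tfrac{1}{(i+1)^2}\bigr)$ are bounded above uniformly in $N$. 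Letting $N \to \infty$ and using that every term $x_i/i^2$ is non-negative (so bounded partial sums is equivalent to convergence) yields $\sum_{i=1}^{\infty} x_i/i^2 < \infty$.

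I do not anticipate a real obstacle; the only points requiring care are keeping the non-negativity of the $x_i$ in view (so that bounded partial sums give convergence) and checking that the coefficient bound $2i+1 \leq 3i$ holds from $i=1$ onward. If the summation-by-parts bookkeeping felt clumsy, the fallback is a dyadic blocking argument: on each block $i \in [2^k, 2^{k+1})$ one has $1/i^2 \leq 4^{-k}$ and $\sum_{i \in [2^k, 2^{k+1})} x_i \leq S_{2^{k+1}} \leq C\, 2^{(k+1)(1+\theta)}$, so the block contributes at most $C\, 2^{1+\theta}\, 2^{k(\theta-1)}$, and summing this geometric series over $k \geq 0$ (ratio $2^{\theta-1} < 1$ since $\theta < 1$) gives the claim directly.
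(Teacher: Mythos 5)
Your proof is correct and is essentially the same approach as the paper's: the paper defines $\phi(M) = S_M/M^2$ and derives a recursion $\frac{x_{M+1}}{(M+1)^2} \leq \phi(M+1)-\phi(M) + \frac{2\phi(M)}{M+1}$, which after summing is precisely Abel summation in disguise, with your boundary term $S_N/N^2$ appearing as $\phi(G+1)$ and your telescoping coefficient $S_i\bigl(\tfrac{1}{i^2}-\tfrac{1}{(i+1)^2}\bigr) = \phi(i)\tfrac{2i+1}{(i+1)^2}$ matching the paper's $\tfrac{2\phi(M)}{M+1}$ term up to lower-order corrections. Your explicit summation-by-parts formulation and the dyadic-blocking fallback are both cleaner ways to present the same idea, but they are not a substantively different argument.
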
 
\begin{proof} 
For $M \in \{1, 2, 3, \ldots\}$, define $\phi(M)$ as: 
\[ \phi(M) \defequiv \frac{1}{M^2}\sum_{i=1}^Mx_i \]
Then clearly: 
\begin{equation} \label{eq:boundphi} 
 \phi(M) \leq \frac{C}{M^{1-\theta}} \: \: \forall M \in \{1, 2, 3, \ldots\} 
 \end{equation} 
 On the other hand, from the definition of $\phi(M)$ we have for all $M \in \{1, 2,3, \ldots\}$: 
 \[ \phi(M+1) = \phi(M)\frac{M^2}{(M+1)^2} + \frac{x_{M+1}}{(M+1)^2} \]
So: 
\[ \phi(M+1) = \phi(M)\left[1 - \frac{2}{M+1} + \frac{1}{(M+1)^2}\right] + \frac{x_{M+1}}{(M+1)^2} \]
Thus: 
\begin{eqnarray*}
 \frac{x_{M+1}}{(M+1)^2} &=& \phi(M+1) - \phi(M) + \frac{2\phi(M)}{M+1} - \frac{\phi(M)}{(M+1)^2}\\
&\leq& \phi(M+1) - \phi(M) + \frac{2\phi(M)}{M+1} 
\end{eqnarray*}
where the final inequality holds because $\phi(M) \geq 0$. 
Summing the above over $M \in \{1, \ldots, G\}$ for some positive integer $G$ yields: 
\begin{eqnarray*}
\sum_{M=1}^{G} \frac{x_{M+1}}{(M+1)^2} &\leq& \phi(G+1) - \phi(1) + 2\sum_{M=1}^G\frac{\phi(M)}{M+1} \\
\end{eqnarray*}
Because $\phi(1) = x_1$, rearranging the above yields: 
\begin{eqnarray} 
 \sum_{M=1}^{G+1} \frac{x_M}{M^2} &\leq& \phi(G+1) + 2\sum_{M=1}^G \frac{\phi(M)}{M+1} \nonumber \\
 &\leq& \frac{C}{(G+1)^{1-\theta}} + 2\sum_{M=1}^G \frac{C}{M^{1-\theta}(M+1)} \label{eq:appdlast} 
 \end{eqnarray}
 where (\ref{eq:appdlast}) follows from (\ref{eq:boundphi}).  Because $\theta < 1$, the first term on the right-hand-side
 of (\ref{eq:appdlast}) goes to $0$ as $G\rightarrow\infty$, and the second term is a summable series and hence is 
 less than a bounded constant as $G\rightarrow\infty$.  Thus: 
 \[ \lim_{G\rightarrow\infty} \sum_{M=1}^{G+1} \frac{x_M}{M^2} < \infty \]
\end{proof} 

\section*{Appendix E --- Proof of Theorem \ref{thm:rs-main} on Rate Stability} 
 
 We prove Theorem \ref{thm:rs-main} with the help of two preliminary lemmas. 
 Let $Q(t)$ be a non-negative stochastic process defined over $t \in \{0, 1, 2, \ldots\}$. 
 Fix $\delta>0$, and for each non-negative integer $n$ define $t_n(\delta)$ by: 
 \begin{equation} \label{eq:t-n-delta} 
  t_n(\delta) \defequiv \lceil n^{1+\delta}\rceil 
  \end{equation} 
  where $\lceil x \rceil$ represents the smallest integer greater than or equal to $x$. 
 The sequence $\{t_n(\delta)\}_{n=0}^{\infty}$ is a (sparse) 
 subsequence of the non-negative integers that increases super-linearly with $n$.  
 Lemma \ref{lem:rs1} below shows that if $\expect{Q(t)^2}$ grows at most linearly with $t$,  then
 $Q(t)$ is rate stable when sampled over the subsequence $\{t_n(\delta)\}_{n=0}^{\infty}$. 
 We note that rate stability over this sparse sampling is not as strong as ordinary rate stability.  This is 
 because $Q(t)/t$ may 
 not converge to zero, even though it converges to $0$ over the sparse sampling. However, Lemma \ref{lem:rs2} below
 shows that rate stability over the sparse sampling, together with an additional second moment bound on changes
 in $Q(t)$, is sufficient to ensure ordinary rate stability. 
  
 \begin{lem} \label{lem:rs1} Suppose  there is a finite constant $C>0$ and a positive integer $t^*$ such that: 
 \begin{equation*} 
 \expect{Q(t)^2} \leq Ct \: \: \forall t \geq t^*
 \end{equation*}
  Then for any $\delta>0$,  $Q(t)$ is rate stable when sampled over the subsequence of times $\{t_n(\delta)\}_{n=0}^{\infty}$. That is: 
  \begin{equation*} 
  \lim_{n\rightarrow\infty} \frac{Q(t_n(\delta))}{t_n(\delta)} = 0 \: \: (w.p.1) 
  \end{equation*} 
  \end{lem}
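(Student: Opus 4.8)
The plan is to reduce the claim to the almost-sure convergence criterion already recorded in Lemma \ref{lem:wp1}. Since $t_n(\delta) = \lceil n^{1+\delta}\rceil \to \infty$ as $n\to\infty$, there is an index $N_0$ such that $t_n(\delta) \geq t^*$ for all $n \geq N_0$; for those $n$ the hypothesis applies, so I would define $R(n) \defequiv Q(t_n(\delta))/t_n(\delta)$ for $n \geq N_0$ and set $R(n) \defequiv 0$ for $n < N_0$ (this truncation changes neither the value of $\lim_{n\to\infty} R(n)$ nor the finiteness of the relevant series). The goal is then simply to verify that $\sum_{n=1}^{\infty} \expect{R(n)^2} < \infty$, after which Lemma \ref{lem:wp1} (applied with $Y(n) = R(n)$ and $\alpha = 0$) yields $R(n) \to 0$ w.p.1, which is exactly the assertion $\lim_{n\to\infty} Q(t_n(\delta))/t_n(\delta) = 0$ w.p.1.

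The verification is a one-line estimate. For $n \geq N_0$ we have $t_n(\delta) \geq t^*$, so $\expect{Q(t_n(\delta))^2} \leq C\,t_n(\delta)$, and since $t_n(\delta) = \lceil n^{1+\delta}\rceil \geq n^{1+\delta}$,
\[
\expect{R(n)^2} \;=\; \frac{\expect{Q(t_n(\delta))^2}}{t_n(\delta)^2} \;\leq\; \frac{C}{t_n(\delta)} \;\leq\; \frac{C}{n^{1+\delta}}.
\]
Summing over $n$ and discarding the finitely many (zero) terms with $n < N_0$,
\[
\sum_{n=1}^{\infty} \expect{R(n)^2} \;\leq\; C\sum_{n=N_0}^{\infty} \frac{1}{n^{1+\delta}} \;<\; \infty,
\]
because $1 + \delta > 1$ makes this a convergent $p$-series. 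This is precisely the hypothesis of Lemma \ref{lem:wp1}, so the proof concludes.

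There is no real obstacle here; the lemma is essentially the Chebyshev-plus-union-bound argument of Section \ref{section:wp1} applied along the sparse sampling times. The only points deserving a line of care are bookkeeping ones: the bound $\expect{Q(t)^2}\leq Ct$ is assumed only for $t \geq t^*$ (and $t_0(\delta) = 0$ would even make $R(0)$ undefined), which is why one restricts to $n \geq N_0$ and truncates; and one should note that the hypothesis itself guarantees $\expect{Q(t_n(\delta))^2} < \infty$ for $n \geq N_0$, so every expectation written above is well-defined. It is worth emphasizing --- as the surrounding discussion already flags --- that this gives rate stability only along the subsequence $\{t_n(\delta)\}$, not ordinary rate stability; closing that gap is the business of Lemma \ref{lem:rs2}.
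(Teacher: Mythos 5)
Your proof is correct and essentially identical to the paper's: the paper runs the Chebyshev inequality plus a union bound directly on $Pr[Q(t_n(\delta))/t_n(\delta) > \epsilon]$, while you invoke Lemma \ref{lem:wp1}, which the paper itself derives from exactly those two ingredients. The key estimate $\expect{Q(t_n(\delta))^2}/t_n(\delta)^2 \leq C/t_n(\delta) \leq C/n^{1+\delta}$ and the convergent $p$-series conclusion are the same, and your truncation at $n \geq N_0$ mirrors the paper's restriction to $M$ with $t_M(\delta) \geq t^*$.
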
 
  \begin{proof} 
  Fix $\epsilon>0$.  It suffices to show that: 
  \begin{equation} 
   \lim_{M\rightarrow\infty} Pr[\cup_{n \geq M} \{Q(t_n(\delta))/t_n(\delta) > \epsilon\}] = 0 \label{eq:to-prove-rs} 
  \end{equation} 
  To this end, note by the Markov inequality that for any slot $t\geq t^*$: 
  \[ Pr[Q(t)/t > \epsilon] = Pr[Q(t)^2 > \epsilon^2t^2] \leq \frac{\expect{Q(t)^2}}{\epsilon^2t^2} \leq \frac{C}{\epsilon^2t} \]
  Substituting $t = t_n(\delta)$ into the above inequality (assuming that $t_n(\delta)\geq t^*$) yields: 
  \[ Pr[Q(t_n(\delta))/t_n(\delta) > \epsilon] \leq \frac{C}{\epsilon^2 t_n(\delta)} \leq \frac{C}{\epsilon^2 n^{(1+\delta)}} \]
  Therefore, by the union bound, we have for any positive integer $M$ such that $t_M(\delta) \geq t^*$:
  \begin{eqnarray*}
   0 \leq Pr[\cup_{n\geq M} \{Q(t_n(\delta))/t_n(\delta) > \epsilon\}] &\leq& \sum_{n=M}^{\infty} Pr[Q(t_n(\delta))/t_n(\delta)>\epsilon] \\
  &\leq& \sum_{n=M}^{\infty} \frac{C}{\epsilon^2 n^{(1+\delta)}} < \infty 
  \end{eqnarray*}
  Thus, the probability on the left-hand-side of the above chain of inequalities 
  is bounded by the tail of a convergent series, and so (\ref{eq:to-prove-rs}) holds.   
  \end{proof} 
  
 \begin{lem} \label{lem:rs2} Suppose  there is a finite constant $C>0$ and a positive integer $t^*$ such that: 
 \begin{equation*}
 \expect{Q(t)^2} \leq Ct \: \: \forall t \geq t^*
 \end{equation*}
  Further suppose there is a finite constant $D>0$ such that for all $t\in\{0, 1, 2, \ldots\}$ we have: 
 \[ \expect{(Q(t+1) - Q(t))^2} \leq D \]
 Then $Q(t)$ is rate stable. 
 \end{lem}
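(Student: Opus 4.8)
The plan is to combine Lemma \ref{lem:rs1}, which gives rate stability along the sparse subsequence $\{t_n(\delta)\}$, with an interpolation argument that controls $Q(t)$ between consecutive sample points using the bounded second moment of the increments. First I would fix $\delta = 1$ (any $\delta>0$ works), so that $t_n(\delta) = \lceil n^{1+\delta}\rceil = n^2$. Since $\expect{Q(t)^2} \leq Ct$ for all $t\geq t^*$, Lemma \ref{lem:rs1} immediately yields $\lim_{n\rightarrow\infty} Q(n^2)/n^2 = 0$ with probability 1.

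Next I would control $Q(t)$ for times $t$ lying strictly between the sample points. For $n^2 \leq t < (n+1)^2$, the triangle inequality gives $Q(t) \leq Q(n^2) + \sum_{\tau=n^2}^{t-1}|Q(\tau+1)-Q(\tau)| \leq Q(n^2) + S_n$, where $S_n \defequiv \sum_{\tau=n^2}^{(n+1)^2-1}|Q(\tau+1)-Q(\tau)|$ is a sum of exactly $2n+1$ consecutive increments. Dividing by $t$, using $t\geq n^2$ and $Q(t)\geq 0$, gives $0 \leq Q(t)/t \leq Q(n^2)/n^2 + S_n/n^2$. The first term on the right goes to $0$ (w.p.1) by the previous step, so the whole problem reduces to showing $S_n/n^2 \rightarrow 0$ with probability 1.

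For that step, by the Cauchy-Schwartz inequality $S_n^2 \leq (2n+1)\sum_{\tau=n^2}^{(n+1)^2-1}(Q(\tau+1)-Q(\tau))^2$, so the increment bound gives $\expect{S_n^2} \leq (2n+1)^2 D$, hence $\expect{(S_n/n^2)^2} \leq (2n+1)^2 D/n^4$. Since $\sum_{n=1}^{\infty}(2n+1)^2D/n^4 < \infty$, Lemma \ref{lem:wp1} applied to $Y_n \defequiv S_n^2/n^4$ (with $\alpha=0$) gives $S_n/n^2 \rightarrow 0$ with probability 1. Letting $t\rightarrow\infty$ (so $n\rightarrow\infty$) in the bound $Q(t)/t \leq Q(n^2)/n^2 + S_n/n^2$ then shows $\limsup_{t\rightarrow\infty} Q(t)/t \leq 0$ w.p.1, and combined with $Q(t)\geq 0$ this gives $\lim_{t\rightarrow\infty} Q(t)/t = 0$ w.p.1, i.e., rate stability.

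The hard part will be the almost-sure convergence of the gap term $S_n/n^2$: a naive first-moment estimate only gives $\expect{S_n}\leq(2n+1)\sqrt{D}$ and thus $Pr[S_n/n^2>\epsilon] = O(1/n)$ by Markov's inequality, which is not summable and does not imply convergence with probability 1. The fix is to pass to the second moment via Cauchy-Schwartz, which improves the decay to $O(1/n^2)$ and makes $\sum_n \expect{(S_n/n^2)^2}$ finite. This is also exactly why a super-linearly increasing sampling schedule is needed: one requires the spacing $t_{n+1}-t_n = 2n+1$ to be of strictly smaller order than $t_n = n^2$.
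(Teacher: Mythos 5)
Your proof is correct, and it takes a genuinely different route from the paper's.  Both arguments follow the same outline --- use Lemma \ref{lem:rs1} to get convergence along the sparse subsequence, then control the gap between consecutive sample points --- but the mechanism for controlling the gap differs.  The paper argues pointwise: by a Markov inequality and Borel--Cantelli, with probability $1$ the individual increments eventually satisfy $|Q(t+1)-Q(t)| < t^{3/4}$; it then multiplies this bound by the gap length $t_{n+1}-t_n$, which forces a careful choice of $\delta$ satisfying $\delta + (3/4)(1+\delta) < 1$ to make the bookkeeping close.  You instead treat the bridge term $S_n = \sum_{\tau=n^2}^{(n+1)^2-1}|Q(\tau+1)-Q(\tau)|$ in aggregate: Cauchy--Schwarz gives $\expect{S_n^2}\le(2n+1)^2 D$ in one stroke, and a direct Chebyshev/second-moment Borel--Cantelli argument (your Lemma \ref{lem:wp1}) then gives $S_n/n^2\to 0$ a.s.\ with no exponent tuning at all --- $\delta=1$ works outright.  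Your version is cleaner and avoids the Taylor expansion of $t_{n+1}-t_n$; the paper's version makes explicit the almost-sure eventual pointwise bound on increments, which is perhaps a more ``visual'' picture but at the cost of extra parameter juggling.  Your remark about why a first-moment bound on $S_n$ would fail ($O(1/n)$ is not summable) is exactly the right diagnosis of why the second moment is what's needed, and mirrors why the paper goes to the $3/4$ power rather than $1/2$.  One cosmetic quibble: to invoke Lemma \ref{lem:wp1} you should set $Y_n = S_n/n^2$ (so that $\expect{(Y_n-0)^2} = \expect{S_n^2}/n^4$ is the summable quantity), not $Y_n = S_n^2/n^4$; this is the same harmless notational slip that appears in the paper's own proof of Corollary \ref{cor:rate-stable} and does not affect the argument.
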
 
 \begin{proof} 
 Fix a value $\delta$ such that $0 < \delta < 1$ and $\delta + (3/4)(1+\delta ) < 1$.  
 By Lemma \ref{lem:rs1} we know that $Q(t)$ is rate stable when sampled over times $\{t_n(\delta)\}_{n=0}^{\infty}$, where $t_n(\delta)$ is 
 defined in (\ref{eq:t-n-delta}). For
 simplicity of notation, below we write ``$t_n$'' in replacement for ``$t_n(\delta)$.''
 Thus, $t_n \defequiv \lceil n^{(1+\delta)}\rceil$, and:
 \[ \lim_{n\rightarrow\infty} \frac{Q(t_n)}{t_n} = 0 \: \: (w.p.1) \]

 Now note by the Markov inequality that for all $t \geq0$: 
 \[ Pr[|Q(t+1)-Q(t)| \geq t^{3/4}] = Pr[(Q(t+1)-Q(t))^2 \geq t^{3/2}] \leq \frac{D}{t^{3/2}} \]
 Thus, for any integer $M>0$: 
 \[ Pr[\cup_{t\geq M} \{|Q(t+1) - Q(t)| \geq t^{3/4}\}] \leq \sum_{t=M}^{\infty} \frac{D}{t^{3/2}} < \infty \]
 Thus: 
 \[ \lim_{M\rightarrow\infty} Pr[\cup_{t\geq M} \{|Q(t+1) - Q(t)| \geq t^{3/4}\}] = 0 \]
 It follows that, with probability 1, there is some positive random integer $K$ such that 
 $|Q(t+1) - Q(t)| < t^{3/4}$ for all $t \geq K$.  
 
 Now for any integer $t>0$, define $n(t)$ as the integer such that $t_{n(t)} \leq t < t_{n(t)+1}$.  Then for 
 any $t>0$ such that $t_{n(t)} \geq K$, we have: 
 \begin{eqnarray*}
 Q(t) &\leq& Q(t_{n(t)}) + [t_{n(t)+1} - t_{n(t)}]t_{n(t)+1}^{3/4} \\
 &\leq&  Q(t_{n(t)}) + [t_{n(t)+1} - t_{n(t)}][(n(t)+1)^{(3/4)(1+\delta)} + 1]
 \end{eqnarray*}
 Thus: 
  \begin{eqnarray}
 \frac{Q(t)}{t} &\leq& \frac{Q(t_{n(t)}) + [t_{n(t)+1} - t_{n(t)}][(n(t)+1)^{(3/4)(1+\delta)}+1]}{t_{n(t)}} \label{eq:xybaz} 
 \end{eqnarray}
 On the other hand, for any $n>0$ we have by a Taylor expansion
 \begin{eqnarray*}
  t_{n+1}  &\leq& 1 + (n+1)^{1+\delta} \\
  &\leq& 1 + n^{1+\delta} + (1+\delta)n^{\delta} + \frac{(1+\delta)\delta}{2} n^{\delta-1} \\
  &\leq& a + n^{1+\delta} +(1+\delta)n^{\delta}
  \end{eqnarray*}
  where $a\defequiv 1 + (1+\delta)\delta/2$. Thus, for any $n(t)>0$ we have: 
  \[ t_{n(t)+1} - t_{n(t)} \leq t_{n(t)+1} - n(t)^{1+\delta} \leq a + (1+\delta)n(t)^{\delta}  \]
  Using this in (\ref{eq:xybaz}) yields: 
    \begin{eqnarray}
 0 \leq \frac{Q(t)}{t} &\leq& \frac{Q(t_{n(t)}) + [a + (1+\delta)n(t)^{\delta} ][(n(t)+1)^{(3/4)(1+\delta)}+1]}{t_{n(t)}} \label{eq:xybaz2} \\
 &\leq& \frac{Q(t_{n(t)})}{t_{n(t)}} + \frac{a[(n(t)+1)^{3/4(1+\delta)}+1]}{n(t)^{1+\delta}} + \frac{(1+\delta)n(t)^{\delta}[(n(t)+1)^{(3/4)(1+\delta)}+1]}{n(t)^{1+\delta}} 
 \end{eqnarray}
 Taking limits and using the fact that $Q(t_{n(t)})/t_{n(t)}\rightarrow 0$ with probability 1, and the fact that $\delta + (3/4)(1+\delta)<1$, 
 yields: 
 \[ 0 \leq \lim_{t\rightarrow\infty} Q(t)/t \leq 0 \: \: (w.p.1) \]
\end{proof} 

We now prove Theorem \ref{thm:rs-main}. 
Let $\bv{Q}(t) = (Q_1(t), \ldots, Q_K(t))$ be a 
stochastic vector defined over $t \in \{0, 1, 2, \ldots\}$.  Assume $\bv{Q}(t)$ has
real-valued entries. Define the quadratic
Lyapunov function $L(\bv{Q}(t))$ as in (\ref{eq:lyapunov-function}) and define the drift
$\Delta(\script{H}(t))$ as in (\ref{eq:lyapunov-drift}). 
Suppose there is a finite constant $B>0$ such that for all $\tau \in \{0, 1, 2, \ldots\}$ and all possible $\script{H}(\tau)$, 
we have: 
\begin{equation} \label{eq:dpp-apprs} 
\Delta(\script{H}(\tau)) \leq B 
\end{equation} 

\begin{proof} (Theorem \ref{thm:rs-main} part (a)) 
Assume that $\expect{L(\bv{Q}(0))} < \infty$. 
Fix a slot $\tau\geq0$. Taking expectations of (\ref{eq:dpp-apprs}) yields: 
\[ \expect{L(\bv{Q}(\tau+1))} - \expect{L(\bv{Q}(\tau))} \leq B \]
Summing the above over $\tau \in \{0, 1, \ldots, t-1\}$ for some integer $t>0$ yields: 
\[ \expect{L(\bv{Q}(t))} - \expect{L(\bv{Q}(0))} \leq Bt \]
Substituting the definition of $L(\bv{Q}(t))$ in (\ref{eq:lyapunov-function}) into the above inequality yields: 
\begin{equation} \label{eq:thm-app-rs} 
 \frac{1}{2}\sum_{k=1}^Kw_k\expect{Q_k(t)^2} \leq Bt + \expect{L(\bv{Q}(0))} 
 \end{equation} 

It follows from (\ref{eq:thm-app-rs}) that for each $k \in \{1, \ldots, K\}$: 
\begin{equation} \label{eq:alfa1} 
\expect{|Q_k(t)|}^2 \leq \expect{Q_k(t)^2} \leq \frac{2Bt + 2\expect{L(\bv{Q}(0))}}{w_k}
\end{equation} 
and so: 
\[ \expect{|Q_k(t)|} \leq \sqrt{2Bt/w_k+ 2\expect{L(\bv{Q}(0))}/w_k} \]
Dividing the above by $t$ and taking limits as $t\rightarrow\infty$ shows
that $Q_k(t)$ is mean rate stable, 
proving part (a). 
\end{proof} 

\begin{proof} (Theorem \ref{thm:rs-main} part (b)) 
First assume that $\bv{Q}(0)$ is a given finite constant (with probability 1), so that $\expect{L(\bv{Q}(0))} = L(\bv{Q}(0))$. 
We have from (\ref{eq:alfa1}) that for all $t\geq 1$ and all $k \in \{1, \ldots, K\}$:  
\[ \expect{Q_k(t)^2} \leq \frac{[2B+ 2L(\bv{Q}(0))]t}{w_k}\]
Furthermore, it can be shown that $\expect{(Q_k(t+1) - Q_k(t))^2} \leq D$ implies 
$\expect{(|Q_k(t+1)| - |Q_k(t)|)^2} \leq D$. 
Thus,  the conditions required to apply Lemma \ref{lem:rs2} hold (using $Q(t) = |Q_k(t)|$, 
$t^* = 1$ and $C = [2B + 2L(\bv{Q}(0))]/w_k$). 
Then Lemma \ref{lem:rs2} ensures $|Q_k(t)|$ is rate stable for all $k \in \{1, \ldots, K\}$,
and hence $Q_k(t)$ is rate stable for all $k \in \{1, \ldots, K\}$. The above holds whenever the initial condition
$\bv{Q}(0)$ is any given finite constant, and hence it holds whenever $\bv{Q}(0)$ is finite with probability 1. 
\end{proof} 

\bibliographystyle{unsrt}
\bibliography{../../../latex-mit/bibliography/refs}
\end{document}